\documentclass[12pt]{amsart}

\input xy
\xyoption{all}

\usepackage{geometry}
\usepackage{amsmath}
\usepackage{amssymb}

\usepackage{comment}

\usepackage{amsthm}  
\geometry{a4paper} 
\usepackage{mathtools}
\usepackage{amsmath, amsthm, amssymb}
\usepackage{listings}
\usepackage{url}
\usepackage{hyperref}
\usepackage[utf8]{inputenc}
\usepackage{graphicx}
\usepackage{tikz-cd}
\usepackage{cite}
\theoremstyle{definition}
\newtheorem{mydef}{Definition}[section]
\newtheorem{lem}[mydef]{Lemma}
\newtheorem{thm}[mydef]{Theorem}

\newtheorem{question}[mydef]{Question}
\newtheorem{hypothesis}[mydef]{Hypothesis}

\newtheorem{defin}[mydef]{Definition}
\newtheorem{example}[mydef]{Example}
\newtheorem{remark}[mydef]{Remark}
\newtheorem{notation}[mydef]{Notation}
\newtheorem{fact}[mydef]{Fact}



\newcommand{\cf}{\text{cf }}

\newcommand{\ded}{\mbox{ded } }

\newcommand{\concat}{%
  \mathord{
    \mathchoice
    {\raisebox{1ex}{\scalebox{.7}{$\frown$}}}
    {\raisebox{1ex}{\scalebox{.7}{$\frown$}}}
    {\raisebox{.5ex}{\scalebox{.5}{$\frown$}}}
    {\raisebox{.5ex}{\scalebox{.5}{$\frown$}}}
  }
}

\newbox\noforkbox \newdimen\forklinewidth
\forklinewidth=0.3pt \setbox0\hbox{$\textstyle\smile$}
\setbox1\hbox to \wd0{\hfil\vrule width \forklinewidth depth-2pt
 height 10pt \hfil}
\wd1=0 cm \setbox\noforkbox\hbox{\lower 2pt\box1\lower
2pt\box0\relax}
\def\unionstick{\mathop{\copy\noforkbox}\limits}
\def\nf{\unionstick}

\setbox0\hbox{$\textstyle\smile$}
\setbox1\hbox to \wd0{\hfil{\sl /\/}\hfil} \setbox2\hbox to
\wd0{\hfil\vrule height 10pt depth -2pt width
               \forklinewidth\hfil}
\wd1=0 cm \wd2=0 cm
\newbox\doesforkbox
\setbox\doesforkbox\hbox{\lower 0pt\box1 \lower
2pt\box2\lower2pt\box0\relax}

\def\1nf{\unionstick^{(1)}}

\def\2nf{\unionstick^{(2)}}

\def\3nf{\unionstick^{(3)}}

\def\forkindep{\mathrel{\raise0.2ex\hbox{\ooalign{\hidewidth$\vert$\hidewidth\cr\raise-0.9ex\hbox{$\smile$}}}}}

\newcommand{\indep}[4]{#2 \overset{#4}{\underset{#1}{\forkindep}}  #3}

\newcommand{\type}{\textbf{gtp}}

\title{An NIP-like Notion in abstract elementary classes}

\date{\today\\
AMS 2010 Subject Classification: Primary:  03C45, 03C48. Secondary: 03C52.} 
\keywords{Abstract Elementary Classes; forking; Classification Theory; NIP; good frames}

\parindent 0pt
\parskip 5pt

\setcounter{tocdepth}{1}

\author{Wentao Yang}
\email{ndwyang@imu.edu.cn}
\urladdr{http://wen-tao-y.github.io}
\address{School of Mathematical Sciences, Inner Mongolia University, Hohhot, Inner Mongolia, China}

\begin{document}

\begin{abstract}
This paper is a contribution to the ``neo-stability'' type of result for abstract elementary classes. Under certain set theoretic assumptions, we propose a definition and a characterization of NIP in AECs. The class of AECs with NIP properly contains the class of stable AECs\footnotemark. We show that for an AEC $K$ and $\lambda\geq LS(K)$, $K_\lambda$ is NIP if and only if there is a notion of nonforking on it which we call a w*-good frame. On the other hand, the negation of NIP leads to being able to encode subsets.

\end{abstract}
\maketitle

\tableofcontents
\stepcounter{footnote}
\footnotetext{See Examples \ref{ex1} and \ref{ex2} for AECs that are unstable, not elementary but NIP.}
\section{Introduction}

There is a massive body of literature on ``neostability'' for first order theories dedicated to exploration and study of forking-like relations for various classes of unstable theories. The main classes: NIP theories, simple theories, theories with the strict order property, theories with the tree property of type 1 and 2, were all presented by Shelah in \cite{shelahfobook78}. In mid 1976 Shelah set the program which he named \textbf{classification theory for non-elementary classes}. A few years later the focus shifted to abstract elementary classes (AECs).

An appropriate generalization of stability for AECs was introduced in \cite{sh394} building on many previous papers including \cite{sh12} and \cite{grsh}. In the last forty years starting with \cite{sh238} much was discovered about analogues of superstability. See \cite{v2}, \cite{gv}, and \cite{leung3} for some recent work.

In this paper we propose progress towards ``neostability of AECs'', more precisely, exploring an analogue of NIP and its negation. We propose a definition (under a certain cardinal arithmetic axiom) of NIP. Using techniques from papers by Shelah \cite{sh600}, Jarden and Shelah \cite{jrsh875} and Mazari-Armida \cite{mazariwgood}, we obtain a characterization of NIP in AECs using frames (a forking-like relation).

The notion of the $\lambda$-stable AEC was first studied in \cite{sh394} using non-splitting. Various frameworks of forking-like relations were introduced. In \cite{sh600}, Shelah introduced the local notion of the good $\lambda$-frame, an axiomatization of forking-like relations for structures of cardinality $\lambda$ in AECs, as a parallel of superstability. In \cite{BGforking} Boney and Grossberg established that for ``nice'' AECs, stablity implies existence of strong independence relations on the subclass of saturated models, which allows types of arbitrary length. In \cite{BGKV} it was shown that this relation and several others are unique/canonical (if they exist). 

Although good $\lambda$-frames are nice and powerful, sometimes they might not exist. There are several weaker notions, where some of the axioms of a good $\lambda$-frame are weakened or dropped. Vasey worked with good$^-$ $\lambda$-frames in \cite{v2} and good$^{-S}$ $\lambda$-frames in \cite{v1}.
Jarden and Shelah defined semi-good $\lambda$-frames in \cite{jrsh875}. Mazari-Armida introduced w-good $\lambda$-frames in \cite{mazariwgood}, which is weaker than all the axiomatic frames mentioned above.

\begin{mydef} 
Let $K$ be an AEC, $\lambda\geq LS(K)$. $K_\lambda$ has NIP if for all $M\in K_\lambda$, $|gS(M)|\leq  \ded \lambda .$
   
\end{mydef}

Our definition of NIP will be discussed further in the next section.

Our main results are:
\begin{thm} [$2^{\lambda^+}>2^\lambda$] Let $K$ be an AEC categorical in $\lambda\geq LS(K)$ , and $1\leq I(\lambda^+,K)<2^{\lambda^+}$. $K_\lambda$ has NIP if and only if there is a w*-good $\lambda$-frame on $K$ except possibly without (Continuity). Moreover, 
\begin{enumerate}
    \item ($\ded \lambda=\lambda^+<2^\lambda$) If $\mathfrak s_{\lambda-unq}$ is $\lambda$-compact, then the w*-good frame satisfies in addition that if $p\in S^{bs}(M)$, then there is $N\geq_K M$ and $q\in S^{bs}(N)$ extending $p$ that does not fork over $N$. In particular, for any $N'\geq_K N$ there is $q'\in gS(N')$ extending $q$ that does not fork over $N$.
    \item if $K$ is $(<\lambda^+,\lambda)$-local, then $\mathfrak s_{\lambda-unq}$ has (Continuity).
\end{enumerate}
\end{thm}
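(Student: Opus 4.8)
The plan is to prove the biconditional by constructing an explicit candidate frame from the NIP hypothesis in the forward direction, and by reading the type bound off the frame axioms in the backward direction. The running hypothesis $2^{\lambda^+}>2^\lambda$ drives everything: it activates the weak diamond $\Phi_{\lambda^+}$, and together with $1\leq I(\lambda^+,K)<2^{\lambda^+}$ this is the mechanism (as in \cite{sh600} and \cite{jrsh875}) that forces the structural prerequisites for a frame on the categorical class $K_\lambda$, namely amalgamation, joint embedding, no maximal models, and density of the relevant amalgamation bases. So the first move is to record these consequences of weak diamond together with few models, after which $K_\lambda$ is a setting in which a frame can be formulated at all.

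For the backward direction, suppose a w*-good $\lambda$-frame exists, possibly without (Continuity). The point of the \emph{w*} weakening is that its stability-type axiom demands only the NIP bound on basic types in place of genuine $\lambda$-stability, so that $|S^{bs}(M)|\leq\ded\lambda$ is essentially built in; this is consistent with the abstract's assertion that NIP properly contains stability. The work is then to lift this bound to all of $gS(M)$: using density of basic types together with uniqueness of nonforking extensions, every Galois type is coded by a nonforking restriction to a submodel together with its position relative to the basic types, and the number of such codes is again $\leq\ded\lambda$. This yields NIP and is the more routine of the two directions.

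For the forward direction, assume $K_\lambda$ has NIP. I would take $\mathfrak{s}_{\lambda-unq}$ to be the frame whose basic types are the uniquely-extending (minimal) types and whose nonforking is non-$\lambda$-splitting, following \cite{mazariwgood}. The heart of the argument is density: above every $M\in K_\lambda$ there must be enough uniquely-extending types to satisfy the density axiom of a w-good frame. Here the NIP bound $|gS(M)|\leq\ded\lambda$ is exactly what one can afford in place of the stability bound $|gS(M)|\leq\lambda$, and the weak-diamond construction of \cite{jrsh875} is adapted so that a failure of density would manufacture $2^{\lambda^+}$ pairwise non-isomorphic models of size $\lambda^+$, contradicting $I(\lambda^+,K)<2^{\lambda^+}$. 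The remaining w*-good axioms, namely invariance, monotonicity, local character, and uniqueness among basic types, then follow from the non-splitting calculus, with (Continuity) deliberately left out since it need not hold in general.

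For the moreover clauses: in (1), the sharper arithmetic $\ded\lambda=\lambda^+<2^\lambda$ makes the type count behave as in the stable case, with exactly $\lambda^+$ many types, and $\lambda$-compactness of $\mathfrak{s}_{\lambda-unq}$ then lets one build a nonforking extension $q\in S^{bs}(N)$ of a given $p$ as the union of an increasing chain of approximations, compactness guaranteeing that the limit type is still basic and nonforking; the final assertion is one further application of the extension property to an arbitrary $N'\geq_K N$. For (2), $(<\lambda^+,\lambda)$-locality says precisely that a type over the union of a chain of length $\leq\lambda$ is determined by its restrictions, so a nonforking chain of types has a nonforking union, which is exactly (Continuity). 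The step I expect to be the main obstacle is the density of uniquely-extending basic types in the forward direction: because NIP supplies only $\ded\lambda$ many types rather than $\lambda$, the counting that locates minimal types is far tighter than in the stable case, and the delicate part is to make the weak-diamond encoding still derive a contradiction with few models from the weaker hypothesis, while simultaneously checking that the resulting nonforking satisfies uniqueness without recourse to (Continuity).
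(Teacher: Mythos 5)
Your proposal misidentifies the frame that makes the theorem work, and this is not a cosmetic issue. In the paper's construction $\mathfrak{s}_{\lambda-unq}$ the \emph{basic} types are the types with the extension property (equivalently, by Fact \ref{extension}, those with $\geq\lambda^+$ realizations), and $p$ does not fork over $M_0$ when $p\restriction_{M_0}$ is $\lambda$-unique; nonforking is not non-$\lambda$-splitting, and the basic types are not the uniquely-extending ones. The paper explicitly flags (in the Question after Lemma \ref{malem}) that taking $\lambda$-unique types as basic, in the style of \cite{mazariwgood}, would make it hard to recover NIP from the frame --- which is exactly the trap your backward direction falls into. Relatedly, you organize the forward direction around a ``density axiom,'' but a w*-good frame has no density, no existence, and no local character axioms (Definition \ref{w*-good}); its axioms are AP/JEP/NMM, (Uniqueness), (Basic NIP), (Few Non-Basic Types), (Continuity) and (Transitivity). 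The genuinely nontrivial step in the forward direction is (Few Non-Basic Types): one shows $|S^{\lambda-al}(M)|\leq\lambda$ (Lemma \ref{allemma}) by arguing that otherwise no small subset of $S^{\lambda-al}(M)$ is inevitable (via Fact \ref{nonstructure3}) and then invoking the weak-diamond non-structure theorem (Fact \ref{nonstructure2}) to get $I(\lambda^+,K)=2^{\lambda^+}$. Your weak-diamond argument is aimed at the wrong target (density of minimal types rather than fewness of small types). Once the frame is set up correctly, the backward direction is immediate --- $|gS(M)|\leq|S^{bs}(M)|+|gS(M)\setminus S^{bs}(M)|\leq\ded\lambda+\lambda=\ded\lambda$ --- and your coding argument via density and uniqueness is both unnecessary and unavailable.

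The moreover clauses also need different mechanisms than you describe. For (1), the content of Lemma \ref{malem} is that above every basic type there is a $\lambda$-unique type; this is proved by building a binary tree of height $\lambda$ of pairwise distinct extensions, each with $\lambda^+$ realizations (using $\lambda$-compactness only to pass through limit levels), embedding all $2^\lambda$ branches into a model saturated in $\lambda^+$ over $\lambda$ of size $\ded\lambda=\lambda^+<2^\lambda$, and deriving a contradiction by counting. Your picture of taking a union of an increasing chain of approximations does not produce the needed $\lambda$-unique extension. For (2), locality alone does not give (Continuity): the proof of Lemma \ref{locallem} must show $p_\delta$ is basic, which it does by assuming $p_\delta$ has $\leq\lambda$ realizations, using locality to assign to each of $\lambda^+$ realizations of $p_0$ a stage where it drops out, applying pigeonhole, and contradicting $\lambda$-uniqueness via the bound $|S^{\lambda-al}(M)|\leq\lambda$. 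So while your overall architecture (build $\mathfrak{s}_{\lambda-unq}$ from NIP, read NIP off the frame) matches the paper's, the key lemmas you propose to prove are not the ones the theorem actually requires.
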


\begin{thm}
Suppose $K$ is $(<\aleph_0)$-tame, $M\in K$, $C\subseteq |M|$, $\lambda:=|C|\geq \beth_3(LS(K))$ and $(\ded \lambda)^{2^{LS(K)}}= \ded \lambda$. Suppose $|gS^1(C;M)|>\ded \lambda$. Then there is $N\in K$, $\langle \bar a_n\in^m |N|\mid n<\omega \rangle$ and $\phi$ in the language of Galois Morleyization
such that for every $w\subseteq \omega$ there is $b_w\in |N|$ such that for all $i<\omega$, 
$$
N\models \phi(\bar a_i,b_w) \iff i\in w
$$
\end{thm}

\begin{thm}
    If $K$ can encode subsets of $\mu:=\beth_{(2^{LS(K)})^+}$, then it can encode subsets of any cardinal. That is, if there are $M\in K$, $\{a_i\mid i<\mu\}\subseteq |M|$, $\{b_w\mid w\subseteq \mu\}\subseteq |M|$ such that for all $w\subseteq \mu$, $$i\in w\iff \phi(a_i,b_w),$$ then we can replace $\mu$ above by any cardinal.
\end{thm}
This paper was written while working on a Ph.D. thesis under the direction of Rami Grossberg at Carnegie Mellon University, and I would like to thank Professor Grossberg for his guidance and assistance in my research in general and in this work
specifically. I would also like to thank Jeremy Beard for checking the proofs and John Baldwin, Jeremy Beard, Will Boney, Artem Chernikov, James Cummings, Samson Leung, Marcos Mazari-Armida, Pedro Marun and Andrés Villaveces for their help, comments and suggestions. I would like to thank the referee for many insightfull comments and suggestions, and raising Questions \ref{q3}, \ref{q1} and \ref{q2}.

It is interesting to comment that Shelah already implicitly discussed similar results in \cite{sh576} dealing with Grossberg's question ``Does  $I(\lambda, K)=I(\lambda^{++},K)=1$ imply $K_{\lambda^{++}}\neq \emptyset$'' and in its followup \cite{sh600}, Chapter II of \cite{shelahaecbook}, and \cite{Sh:E46}, Chapter VI of \cite{shelahaecbook2}. 
More specifically, in \cite[VI.2.3]{shelahaecbook2} and \cite[VI.2.5]{shelahaecbook2} Shelah considered the number of branches of a tree as a bound of Galois types over a model.
\section{Preliminaries}

\begin{notation}\leavevmode
\begin{enumerate}
    \item For any structure $M$ in some language, we denote its universe by $|M|$, and its cardinality by $\|M\|$.
    \item For cardinals $\lambda$ and $\mu$, $[\lambda,\mu):=\{\kappa \in \mbox{Card}\mid \lambda\leq \kappa <\mu\}$. $[\lambda,\infty):=\{\kappa \in \mbox{Card}\mid \lambda\leq \kappa \}$.
    \item $K_{[\lambda,\mu)}:=\{M\in K\mid \|M\|\in [\lambda,\mu)\}$. $K_\lambda:=K_{[\lambda,\lambda^+)}$
\end{enumerate}

\end{notation}

\begin{mydef}
For $K$ an AEC, we say:
\begin{enumerate}
    \item $K$ has the amalgamation property (AP) if for all $M_0\leq M_\ell$ for $\ell=1,2$, there is $N\in K$ and $K$-embeddings $f_\ell:M_\ell\to N$ for $\ell=1,2$ such that $f_1 \restriction_{M_0}=f_2 \restriction_{M_0}$.
    \item $K$ has the joint embedding property (JEP) if for all $M_0$, $M_1\in K$ there are $N\in K$ and $K$-embeddings $f_\ell:M_l\to N$ for $\ell=0,1$. 
    \item $K$ has no maximal models (NMM) if for all $M\in K$ there is $N>_K M$.
\end{enumerate}
\end{mydef}

\begin{remark}
  For a property $P$, e.g. amalgamation, we say that $K_\lambda$ has $P$ or that $K$ has $\lambda$-P if we restrict to $K_\lambda$ in the above definition. 
\end{remark}

\begin{mydef}\leavevmode 
\begin{enumerate}
    \item $K^3_\lambda:=\{(a,M,N)\mid M, N\in K_\lambda, M<_K N, a\in |N|-|M|\}$.
    \item For $(a_0,M_0,N_0)$, $(a_1,M_1,N_1)\in K^3_\lambda$, we say $(a_0,M_0,N_0)\leq (a_1, M_1,N_1)$ if $M_0\leq M_1$, $a_0=a_1$ and $N_0\leq_K N_1$.
    \item For $(a_0,M_0,N_0)$, $(a_1,M_1,N_1)\in K^3_\lambda$ and $K$-embedding $h:N_0\to N_1$, $(a_0,M_0,N_0)\leq_h (a_1, M_1, N_1)$ if $h\restriction_{M_0}:M_0\to M_1$ is a $K$-embedding and $h(a_0)=a_1$.
    
\end{enumerate}
\end{mydef}

\begin{mydef}\leavevmode\label{galoistypes}
\begin{enumerate}
    \item For $(a_0,M_0,N_0), (a_1,M_1,N_1)\in K^3_\lambda$, $(a_0,M_0,N_0) E_{at}(a_1,M_1,N_1)$ if $M_0=M_1$, and there are $N\in K$, $f_0:N_0\to N$, and $f_1:N_1\to N$ $K$-embeddings such that $f_0(a_0)=f_1(a_1)$ and $f_0\restriction_{M_0}=f_1\restriction_{M_0}$.
    \item $E$ is the transitive closure of $E_{at}$.\
    \item For $(a,M,N)\in K^3_\lambda$, \emph{the Galois type of $a$ over $M$ in $N$} is $\type(a/M,N):=[(a,M,N)]_{E}$.
    \item For $M\in K_\lambda$, $gS(M):=\{\type(a/M,N)\mid (a,M,N)\in K^3_\lambda\}$.
    \end{enumerate}
    \item For $M_0\leq_K M\in K_\lambda$ and $p=\type(a/M,N)\in gS(M)$, define $p\restriction_{M_0}:=\type(a/M_0,N)$.
    \item For $M_0\leq_K M_1$ and types $p\in gS(M_0)$ and $q\in gS(M_1)$, we say $p\leq q$ if $p=q\restriction_{M_0}$.
\end{mydef}

 \begin{remark}
   If $K_\lambda$ has AP then $E_{at}=E$.
 \end{remark}

\begin{mydef}
Assume that $K_\lambda$ has AP. For $M$, $N\in K$, $p\in gS(M)$ and $K$-embedding $h:M\to N$, we define $h(p):=\type(h'(a)/h[M], N)$, where $h':M'\to N'$ extends $h$ and $(a,M,M')\in p$. Note that $h(p)$ does not depend on the choice of $(a,M,M')$ or $h'$. See \cite[3.1]{leung3} for a proof.
\end{mydef}

\begin{mydef}
Let $\langle M_i\mid i<\delta\rangle$ be increasing continuous. A sequence of types $\langle p_i\in gS(M_i)\mid i<\delta\rangle$ is coherent if there are $(a_i,N_i)$ for $i<\delta$ and $f_{j,i}:N_j\to N_i$ for $j<i<\delta$ such that:
\begin{enumerate}
    \item $f_{k,i}=f_{j,i}\circ f_{k,j}$ for all $k<j<i$.
    \item $\type(a_i/M_i,N_i)=p_i$.
    \item $f_{j,i}\restriction_{M_j}=id_{M_j}$.
    \item $f_{j,i}(a_j)=a_i$.
\end{enumerate}
\end{mydef}

The notion of coherent sequence of types first appeared in \cite[2.12]{GVcanonical}, Here we use the version in \cite[3.14]{mazariwgood} that avoids the use of a monster model.

 \begin{fact}\cite[12.3]{baldwinbook}
Let $\delta$ be a limit ordinal and $\langle M_i\in K\mid i\leq\delta\rangle$ increasing continuous, and $\langle p_i\in gS(M_i)\mid i<\delta\rangle$ a coherent sequence of types. Then there is $p\in gS(M_\delta)$ an upper bound of $\langle p_i\in gS(M_i)\mid i<\delta\rangle$, where the order is the one from Definition \ref{galoistypes}(5). 
 
 \end{fact}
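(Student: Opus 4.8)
The plan is to construct the witness $p$ explicitly as a Galois type realized in the direct limit of the coherence data, and then to read off the restriction property directly from the definition of $E_{at}$. First I would form the direct limit of the directed system $\langle N_i, f_{j,i} \mid j<i<\delta\rangle$ furnished by coherence. Condition (1) says exactly that this is a directed system of $K$-embeddings, so by the AEC axioms on unions of chains (applied to the system, after the usual rectification of the $f_{j,i}$ into inclusions) there is $N_\delta\in K$ together with $K$-embeddings $f_{i,\delta}:N_i\to N_\delta$ satisfying $f_{j,\delta}=f_{i,\delta}\circ f_{j,i}$ for $j<i<\delta$.

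Next I would locate inside $N_\delta$ both a copy of $M_\delta$ and the realizing element. Put $a:=f_{i,\delta}(a_i)$; this is independent of $i$ since condition (4) gives $f_{i,\delta}(a_i)=f_{i,\delta}(f_{j,i}(a_j))=f_{j,\delta}(a_j)$ for $j<i$. Define $g:M_\delta\to N_\delta$ by $g(m):=f_{i,\delta}(m)$ whenever $m\in |M_i|$; this is well-defined by condition (3) (if $m\in|M_j|$ with $j<i$ then $f_{j,i}(m)=m$, so $f_{j,\delta}(m)=f_{i,\delta}(m)$), and it is a $K$-embedding because the maps $f_{i,\delta}\restriction_{M_i}$ cohere along the continuous chain $\langle M_i\rangle$ (here smoothness is used to pass to the union). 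Replacing $N_\delta$ by an isomorphic copy that renames $g[M_\delta]$ back to $M_\delta$, I may assume $M_\delta\leq_K N_\delta$ and $f_{i,\delta}\restriction_{M_i}=\mathrm{id}_{M_i}$ for all $i$. A short injectivity check then shows $a\notin|M_\delta|$: if $a\in|M_j|$ for some $j$, then $f_{j,\delta}(a)=a=f_{j,\delta}(a_j)$ would force $a=a_j\in|M_j|$, contradicting $(a_j,M_j,N_j)\in K^3_\lambda$. Hence $(a,M_\delta,N_\delta)\in K^3_\lambda$ and $p:=\type(a/M_\delta,N_\delta)\in gS(M_\delta)$ is a legitimate type.

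Finally I would verify that $p$ is an upper bound, i.e.\ $p\restriction_{M_i}=p_i$ for every $i<\delta$. By Definition \ref{galoistypes}(5) we have $p\restriction_{M_i}=\type(a/M_i,N_\delta)$, so it suffices to see $(a_i,M_i,N_i)\mathrel{E_{at}}(a,M_i,N_\delta)$. This is witnessed by the pair $f_{i,\delta}:N_i\to N_\delta$ and $\mathrm{id}_{N_\delta}:N_\delta\to N_\delta$: condition (3) gives $f_{i,\delta}\restriction_{M_i}=\mathrm{id}_{M_i}=\mathrm{id}_{N_\delta}\restriction_{M_i}$, while the definition of $a$ gives $f_{i,\delta}(a_i)=a=\mathrm{id}_{N_\delta}(a)$. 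Therefore $p_i=\type(a_i/M_i,N_i)=\type(a/M_i,N_\delta)=p\restriction_{M_i}$, so $p_i\leq p$, as required.

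The only genuinely non-formal step is the first one: that the direct limit of the directed system exists, lies in $K$, and carries the $M_i$ coherently inside it. This is precisely where the AEC chain/smoothness axioms are invoked, and it is the reason coherence is phrased with explicit embeddings $f_{j,i}$ rather than through a monster model — carrying this data lets us build the colimit by hand and, in particular, requires no amalgamation. Everything after the construction of $N_\delta$, $a$, and the embedding of $M_\delta$ is a mechanical unwinding of the definition of $E_{at}$.
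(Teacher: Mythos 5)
Your proof is correct and follows the same route as the cited argument in Baldwin's book (which the paper quotes as a fact without reproving): form the direct limit of the system $\langle N_i,f_{j,i}\rangle$, use conditions (3) and (4) to identify $M_\delta$ and the common image $a$ inside it, and read off $p\restriction_{M_i}=p_i$ directly from $E_{at}$ via the colimit maps. The key points — that coherence supplies exactly the data needed to build the colimit without amalgamation, and that smoothness embeds $M_\delta$ — are handled as in the source.
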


\begin{fact}\cite[11.3(2)]{baldwinbook}
Let $\delta$ be a limit ordinal, $\langle M_i\in K\mid i\leq\delta\rangle$ increasing continuous, and $\langle p_i\in gS(M_i)\mid i<\delta\rangle$ a sequence of types with upper bound $p\in gS(M_\delta)$. Then there are $\langle N_i\mid i\leq \delta\rangle$ and $\langle f_{j,i}\mid j<i\rangle$ that witness $\langle p_i\in gS(M_i)\mid i\leq\delta\rangle$ being a coherent sequence.
\end{fact}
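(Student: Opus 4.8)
The plan is to treat this as the straightforward converse of the preceding Fact: whereas producing an upper bound from a coherent sequence requires assembling a direct limit, producing a coherent witness from an upper bound only requires pulling everything back to a single model. Concretely, I would first fix a representative $(a,M_\delta,N)\in K^3_\lambda$ of the upper bound, so that $p=\type(a/M_\delta,N)$; by L\"owenheim--Skolem one may take $\|N\|=\lambda$, although no cardinality constraint is actually needed for the argument.

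Next I would unwind the hypothesis that $p$ is an upper bound of $\seq{p_i\mid i<\delta}$. By the definition of the order in Definition \ref{galoistypes}, this says $p_i\le p$, i.e.\ $p_i=p\restriction_{M_i}$, for every $i<\delta$; and by the definition of restriction in the same Definition, $p\restriction_{M_i}=\type(a/M_i,N)$. Hence the single pair $(a,N)$ realizes all of the $p_i$ at once, and writing $p_\delta:=p$ we also have $\type(a/M_\delta,N)=p_\delta$. This is the crux of the matter: a common upper bound forces a common representative.

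Then I would simply take the witnessing data to be constant, setting $N_i:=N$ and $a_i:=a$ for all $i\le\delta$ and $f_{j,i}:=\mathrm{id}_N$ for $j<i\le\delta$, and check the four clauses of coherence. Clause (1) reads $\mathrm{id}_N=\mathrm{id}_N\circ\mathrm{id}_N$; clause (2) is exactly the computation $\type(a_i/M_i,N_i)=\type(a/M_i,N)=p\restriction_{M_i}=p_i$ from the previous step; clause (3) holds because $M_j\le_K N$ gives $\mathrm{id}_N\restriction_{M_j}=\mathrm{id}_{M_j}$; and clause (4) is $\mathrm{id}_N(a_j)=a=a_i$. I expect no genuine obstacle here: the only point requiring care is clause (2), namely that the restriction of $p$ to each $M_i$ is represented by the same ambient triple, and this is immediate from the definition of restriction, so in particular neither amalgamation nor any limit construction is needed. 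This is exactly why this direction is recorded as a routine fact, in contrast to the forward direction of the preceding Fact.
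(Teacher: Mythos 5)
Your argument is correct: since $p$ is an upper bound, any single representative $(a,M_\delta,N)$ of $p$ satisfies $\type(a/M_i,N)=p\restriction_{M_i}=p_i$ for all $i<\delta$, and the constant system $N_i=N$, $a_i=a$, $f_{j,i}=\mathrm{id}_N$ trivially verifies all four clauses of coherence. The paper cites this as a Fact from Baldwin's book without reproducing a proof, and your pullback-to-a-common-representative argument is exactly the standard one for this direction.
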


\begin{mydef}
 \cite[0.22(2)]{sh576} Let $\mu>\lambda$. $N\in K_{\mu}$ is \emph{saturated in $\mu$ above $\lambda$} if for all $M\leq_K N$, $\lambda\leq \|M\|<\mu$, $N$ realizes $gS(M)$.
\end{mydef}

\begin{mydef}\cite[0.26(1)]{sh576}
Let $\mu>\lambda$. $N\in K_{\mu}$ is \emph{homogeneous in $\mu$ for $\lambda$} if for all $M_1\leq_K N$, $M_1\leq_K M_2\in K_\lambda$, $\lambda\leq \|M_1\|\leq \|M_2\|<\mu$, there is $K$-embedding $f:M_2\to N$ above $M_1.$
\end{mydef}

\begin{fact}\label{satmh}\cite[0.26(1)]{sh576} Let $\mu>\lambda$. If $K_\lambda$ has AP then $M\in K_{\mu}$ is saturated over $\mu$ for $\lambda$ if and only if $M$ is homogeneous over $\mu$ for $\lambda$.
  
\end{fact}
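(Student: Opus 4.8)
The plan is to prove the two implications separately, with the homogeneous-to-saturated direction being essentially immediate and the saturated-to-homogeneous direction requiring a transfinite, one-element-at-a-time construction. For the first, I would fix $M\leq_K N$ with $\lambda\leq\|M\|<\mu$ and a type $p\in gS(M)$, and choose a representative $(a,M,M')$ with $p=\type(a/M,M')$ and $a\in|M'|\setminus|M|$; since we adjoin a single element we may take $\|M'\|=\|M\|\in[\lambda,\mu)$. Homogeneity then supplies a $K$-embedding $f:M'\to N$ with $f\restriction M=\mathrm{id}_M$. Taking the amalgam to be $N$ itself with $f$ and $\mathrm{id}_N$, this $f$ witnesses $(a,M,M')\,E_{at}\,(f(a),M,N)$, so $\type(f(a)/M,N)=p$ and $N$ realizes $p$. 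As $M$ and $p$ were arbitrary, $N$ is saturated.

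For the converse I would assume $N$ is saturated and fix $M_1\leq_K N$ and $M_1\leq_K M_2$ with $\|M_1\|,\|M_2\|\in[\lambda,\mu)$, aiming for a $K$-embedding $f:M_2\to N$ with $f\restriction M_1=\mathrm{id}$. First I would fix an increasing continuous resolution $\langle M^i\mid i\leq\delta\rangle$ with $M^0=M_1$, $M^\delta=M_2$, $M^{i+1}=M^i\cup\{a_i\}$, and $\|M^i\|\in[\lambda,\mu)$ throughout (possible because $\|M_2\|<\mu$ and $\|M_1\|\geq\lambda$). I would then build increasing continuous $K$-embeddings $f_i:M^i\to N$ with $f_0=\mathrm{id}_{M_1}$ and set $f:=f_\delta$. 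At limit stages one takes $f_i:=\bigcup_{j<i}f_j$: the images $f_j[M^j]$ form an increasing $K$-chain in $N$ whose union is $\leq_K N$ by the union axiom of AECs, so $f_i$ is again a $K$-embedding.

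The heart of the argument is the successor stage. Given $f_i:M^i\to N$, note that $f_i[M^i]\leq_K N$ has cardinality in $[\lambda,\mu)$. Let $q:=f_i(\type(a_i/M^i,M^{i+1}))\in gS(f_i[M^i])$; this pushforward is well defined precisely by amalgamation, and unwinding its definition gives $q=\type(f_i'(a_i)/f_i[M^i],N')$ for some $K$-embedding $f_i':M^{i+1}\to N'$ extending $f_i$. By saturation $N$ realizes $q$, say by $b_i\in|N|$, and I set $f_{i+1}:=f_i\cup\{(a_i,b_i)\}$. To see this is a $K$-embedding I would use that equality of Galois types collapses to $E_{at}$ under AP: there is an amalgam $N''$ with $K$-embeddings $g_0:N\to N''$ and $g_1:N'\to N''$ fixing $f_i[M^i]$ and with $g_0(b_i)=g_1(f_i'(a_i))$. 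Then $g_0\circ f_{i+1}=g_1\circ f_i'$ on $M^{i+1}$, so $g_0\circ f_{i+1}$ is a $K$-embedding; since $g_0[f_{i+1}[M^{i+1}]]\leq_K N''$ sits inside $g_0[N]\leq_K N''$, the coherence axiom gives $g_0[f_{i+1}[M^{i+1}]]\leq_K g_0[N]$, and transporting back along the isomorphism $g_0:N\cong g_0[N]$ yields $f_{i+1}[M^{i+1}]\leq_K N$, as required.

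The main obstacle is exactly this successor step: converting ``$N$ realizes the transported type $q$'' into ``the partial embedding extends into $N$.'' This is where the hypothesis that $K_\lambda$ has AP is indispensable, both to define the pushforward $f_i(\,\cdot\,)$ of a Galois type and to guarantee $E_{at}=E$, so that equality of Galois types produces the single concrete amalgam used to extend $f_i$. The limit stages and the homogeneous-to-saturated direction are routine bookkeeping by comparison.
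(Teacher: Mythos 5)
Your first direction (homogeneous $\Rightarrow$ saturated) is correct, and since the paper gives no proof of this Fact — it cites Shelah \cite[0.26(1)]{sh576} — your argument must stand on its own. It does not: the saturated $\Rightarrow$ homogeneous direction has a genuine gap at its foundation. You posit a resolution $\langle M^i \mid i \leq \delta\rangle$ of $M_2$ over $M_1$ with one-point increments, $M^{i+1} = M^i \cup \{a_i\}$, and your successor step uses this literally: $f_{i+1} := f_i \cup \{(a_i, b_i)\}$ is defined only on $|M^i| \cup \{a_i\}$, and your verification that $g_0 \circ f_{i+1} = g_1 \circ f_i'$ on $M^{i+1}$ presupposes $|M^{i+1}| = |M^i| \cup \{a_i\}$. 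But no such filtration exists in a general AEC: $|M^i| \cup \{a_i\}$ need not even be the universe of a $\tau$-substructure (the language may have functions), and even in a relational language, being $\leq_K M_2$ is far stronger than being a substructure. The L\"owenheim--Skolem axiom only provides $M^{i+1} \leq_K M_2$ \emph{containing} $M^i \cup \{a_i\}$ with $\|M^{i+1}\| = \|M^i\|$, typically with $\|M^i\|$-many new elements; extending $f_i$ over all of those new elements into $N$ is exactly a one-step instance of the model-homogeneity you are trying to prove, and realizing the single Galois type of $a_i$ gives no control over them. (Already for algebraically closed fields, adjoining one transcendental forces infinitely many new elements into any submodel.) Since Galois types here have models as domains, the obstruction cannot be evaded by shrinking the increments: the inductive scheme itself fails, not merely its write-up.

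The actual proof behind the citation restructures the induction so that an embedding into $N$ is \emph{not} maintained at each stage. One fixes an enumeration $\{a_i \mid i < \delta\}$ of $|M_2| - |M_1|$ and carries a copy of all of $M_2$ along a coherent directed system of amalgams $C_i$ with $N \leq_K C_i$ (after renaming), together with an increasing continuous chain $B_i \leq_K N$ of $\lambda$-sized submodels and embeddings $G_i : M_2 \to C_i$ fixing $M_1$ with $G_i(a_j) \in |B_i|$ for $j < i$. At the successor step one uses saturation to realize $\type(G_i(a_i)/B_i, C_i)$ by some $c \in |N|$, then re-amalgamates over $B_i$ (fixing $B_i$ pointwise) to get $C_{i+1}$ and $G_{i+1}$ with $G_{i+1}(a_i) = c$; at limits one takes direct limits and re-amalgamates with $N$ over $\bigcup_{i<j} B_i$. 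Only at the end, once $G_\delta[M_2] \subseteq |N|$, does the coherence axiom give $G_\delta[M_2] \leq_K N$. Your instinct to invoke coherence is the right one, but it must be deferred to the end of the whole construction rather than applied at each successor step. A secondary issue: your amalgam $N''$ joins $N \in K_\mu$ to $N'$ over $f_i[M^i]$, which requires amalgamation over models of size up to $\mu$, while the hypothesis grants only AP in $\lambda$; the type-equality chase must therefore be localized to $\lambda$-sized submodels of $N$ by choosing small representatives of the types — routine, but it needs saying.
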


\begin{mydef}\cite{sh10}
For a cardinal $\lambda$, 
\begin{align*}
    \ded \lambda :=\sup\{& \kappa \mid \exists \mbox{ a regular } \mu \mbox{ and a tree $T$ with $\leq \lambda$ nodes and $\kappa$ branches of }\\ & \mbox{length } \mu\}.
\end{align*}

\end{mydef}

\begin{fact}\cite[II.4.11]{shelahfobook78}\label{fonip}
Let $T$ be a complete first order theory and $\phi$ a formula in its language. $\lambda$ is an infinite cardinal such that $2^\lambda>\ded \lambda$. The following are equivalent:
\begin{enumerate}
    \item $\phi$ has the independence property.
    \item $|S_\phi(A)|> \ded |A|$ for some infinite set $A$, $|A|=\lambda$.
    \item $|S_\phi(A)|=2^{|A|}$ for some infinite set $A$, $|A|=\lambda$.
\end{enumerate}
\end{fact}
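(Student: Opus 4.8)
The plan is to prove the cycle $(1)\Rightarrow(3)\Rightarrow(2)\Rightarrow(1)$, isolating the standing hypothesis $2^\lambda>\ded\lambda$ as the single ingredient that powers one step. The implication $(3)\Rightarrow(2)$ is immediate: if $|S_\phi(A)|=2^{|A|}$ for some $A$ with $|A|=\lambda$, then $|S_\phi(A)|=2^\lambda>\ded\lambda=\ded|A|$, which is exactly $(2)$. This is the only place the hypothesis is used; the other two implications are theorems of ZFC, which is why the cardinal assumption is needed precisely to separate $\ded\lambda$ from $2^\lambda$ (under GCH, for instance, $(2)$ can never hold while $(3)$ can, so the equivalence genuinely fails without it).

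For $(1)\Rightarrow(3)$ I would argue by compactness. If $\phi(\bar x,\bar y)$ has the independence property then it shatters finite sets of every size, so introducing new constants $\langle \bar a_i\mid i<\lambda\rangle$ and, for each $w\subseteq\lambda$, a constant $\bar b_w$ together with the sentences asserting $\phi(\bar a_i,\bar b_w)$ when $i\in w$ and $\neg\phi(\bar a_i,\bar b_w)$ when $i\notin w$, yields a type every finite fragment of which mentions only finitely many $\bar a_i$ and is therefore satisfiable by shattering a finite set. The type is realized in some model; setting $A:=\bigcup_{i<\lambda}\bar a_i$ gives $|A|=\lambda$, and distinct $w$ force disagreement of $\phi$ on some $\bar a_i$, so the $2^\lambda$ witnesses $\bar b_w$ realize $2^\lambda$ distinct $\phi$-types over $A$. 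Since trivially $|S_\phi(A)|\le 2^{|A|}$, equality holds. Here one uses that the independence property is self-dual, i.e.\ $\phi^{\mathrm{op}}(\bar y,\bar x):=\phi(\bar x,\bar y)$ has it iff $\phi$ does, so that the shattered tuples may be taken in the parameter slot matching the variable convention of $S_\phi$.

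The substance is $(2)\Rightarrow(1)$, which I would prove in contrapositive form: if $\phi$ is NIP then $|S_\phi(A)|\le\ded\lambda$ for every $A$ with $|A|=\lambda$, so $(2)$ fails. Failure of the independence property gives a finite bound $d$ on the size of shattered sets, so the dual family $\{\,\{\bar a:\models\phi(\bar c,\bar a)\}\mid\bar c\,\}$ has VC dimension at most $d$; by the Sauer--Shelah lemma the number of its traces on any finite $F\subseteq A^{|\bar y|}$ — equivalently $|S_\phi(F)|$ — is at most $\sum_{i\le d}\binom{|F|}{i}$, a polynomial in $|F|$. The task is then to upgrade this control of finite traces to a global count by coding each complete $\phi$-type over $A$ as a branch of a tree with at most $\lambda$ nodes, so that the number of types is bounded by the number of such branches, namely $\ded\lambda$ in the sense of the text. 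The guiding prototype is the order formula $\phi(x,y)\equiv x<y$, whose complete $\phi$-types over a linearly ordered parameter set of size $\lambda$ are exactly its Dedekind cuts: this formula is NIP, realizes $\ded\lambda$ many types, and shows the bound is sharp. In general, fixing a linear order on $A^{|\bar y|}$, bounded VC dimension should confine the transitions of each type to at most $d$ cuts plus finitely much boundary data, so that $|S_\phi(A)|\le(\ded\lambda)^{d}\cdot\aleph_0=\ded\lambda$, the last equality holding because $\ded\lambda$ is an infinite cardinal and $d$ is finite.

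The main obstacle is exactly this last combinatorial step: making rigorous that finite VC dimension localizes every complete $\phi$-type to boundedly many cuts of a well-chosen linear order of the parameters, and that the resulting configurations are enumerated by the branches of a tree with $\le\lambda$ nodes (so that the classical identity between Dedekind cuts and branch counts delivers $\ded\lambda$). The delicate point is that along an \emph{arbitrary} ordering of $A^{|\bar y|}$ an NIP formula may still alternate unboundedly; one clean route passes, via Ramsey, to an indiscernible ordering on which $\phi$ has finite alternation rank, and combines this with the finite-trace estimate above, the real work being to arrange the bookkeeping so the global count is genuinely $\le\ded\lambda$ and not merely $\le 2^\lambda$. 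This is the heart of \cite[II.4.11]{shelahfobook78}; the two remaining implications use only compactness and the hypothesis $2^\lambda>\ded\lambda$.
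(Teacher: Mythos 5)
A preliminary remark: the paper does not prove this statement at all --- it is imported as a black-box Fact from \cite[II.4.11]{shelahfobook78} --- so your proposal can only be measured against Shelah's argument and on its own merits. Your two easy implications are fine: $(3)\Rightarrow(2)$ is the one-line use of the hypothesis $2^\lambda>\ded\lambda$ (and you correctly observe this is the only place it is used), and the compactness construction for $(1)\Rightarrow(3)$ is standard, with the self-duality remark being the right way to dispose of the variable convention.

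The genuine gap is $(2)\Rightarrow(1)$, and your own text effectively concedes it. The contrapositive ``NIP implies $|S_\phi(A)|\le\ded|A|$'' \emph{is} the content of II.4.11, and the route you sketch --- Sauer--Shelah on finite traces, then ``bounded VC dimension should confine the transitions of each type to at most $d$ cuts plus finitely much boundary data'' --- does not go through as stated. Finite VC dimension bounds the number of traces on each \emph{finite} parameter set, but it does not bound the alternation of a single $\phi$-type along an arbitrary linear order of $A^{|\bar y|}$ (your parenthetical admits exactly this), so the inequality $|S_\phi(A)|\le(\ded\lambda)^d\cdot\aleph_0$ is asserted precisely at the point where all the work lies. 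The proposed repair via Ramsey and indiscernibles does not close the hole: $A$ is a \emph{fixed} parameter set, and extracting an indiscernible sequence changes the set over which types are being counted; finite alternation over the indiscernible copy does not transfer back to a $\ded$-bound over $A$ itself. Shelah's actual argument runs in the direct rather than contrapositive direction: assuming $|S_\phi(A)|>\ded\lambda$, one filters $A$ and studies the tree of restrictions of $\phi$-types to initial segments, prunes to the nodes with $\ge(\ded\lambda)^+$ extensions, and uses the definition of $\ded$ to force such ``heavy'' nodes to split massively at some level, whence pigeonhole arguments extract a shattered sequence witnessing the independence property. You can see this engine at work inside the present paper itself: the Theorem of Section 5 (the Galois Morleyization result, with its sets $S^0_\alpha$ and $S^1_\alpha$, the branch-counting step ``$T_s$ has $\ge\mu$ branches, hence $|T_s|>\lambda$,'' and the pigeonhole extraction finished by Morley's method) is precisely the AEC analogue of this implication, and its tree-splitting construction is what replaces your cut-counting. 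That construction, or something equivalent to it, is the missing idea in your proposal.
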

\begin{fact}\cite[II.4.12]{shelahfobook78}
    Let $T$ be a complete theory in countable language, and $f_T(\lambda):=\sup\{|S(M)|\mid M\models T$, $\|M \|=\lambda\}$. Then $f_T(\lambda)$ is exactly one of: $\lambda$, $\lambda+2^{\aleph_0}$, $\lambda^{\aleph_0}$, $\ded \lambda$, $(\ded \lambda)^{\aleph_0}$ or $2^\lambda$. See also \cite{keisler}.
\end{fact}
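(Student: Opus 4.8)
The plan is to classify $T$ by its combinatorial dividing lines and read off $f_T(\lambda)$ in each case, after first recording the general bounds. For every $M$ with $\|M\|=\lambda$ one has $\lambda\le |S(M)|\le 2^\lambda$: the lower bound from the realized types of elements of $|M|$, the upper bound because a type is a consistent subset of the $\lambda$ formulas with parameters in $M$. The sharper tool is that a complete type is determined by its restriction to each formula $\phi(x,\bar y)$, so the map $p\mapsto \langle p_\phi \mid \phi \rangle$ into $\prod_\phi S_\phi(M)$ is injective; since the language is countable,
$$|S(M)|\le \prod_{\phi} |S_\phi(M)| \le \big(\sup_\phi |S_\phi(M)|\big)^{\aleph_0}.$$
Thus the global count is governed by the per-formula counts $|S_\phi(M)|$, and the theorem reduces to bounding these according to whether $\phi$ is stable, dependent, or independent, together with matching lower-bound constructions inside a single model of size $\lambda$.

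I would first dispose of the case that some $\phi$ has the independence property. Here the preceding Fact gives a set $A$ of size $\lambda$ with $|S_\phi(A)|=2^\lambda$; concretely, placing an independent family of $\lambda$ parameters inside a model $M$ makes every pattern $\{\phi(x,\bar a_i)^{\eta(i)}\mid i<\lambda\}$ consistent, so $|S_\phi(M)|=2^\lambda$ and $f_T(\lambda)=2^\lambda$, the value $2^\lambda$. Assume henceforth $T$ is dependent. If moreover $T$ is unstable, then every $\phi$ is dependent while some $\psi$ has the order property. The key per-formula lemma (the Sauer–Shelah/cut bound underlying the cited Fact) is $|S_\phi(M)|\le \ded\lambda$, whence the displayed product gives $f_T(\lambda)\le (\ded\lambda)^{\aleph_0}$; and coding a linear order of size $\lambda$ into $M$ via $\psi$ produces one $\psi$-type per Dedekind cut, so $|S(M)|\ge \ded\lambda$. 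This traps $f_T(\lambda)$ in $[\ded\lambda,(\ded\lambda)^{\aleph_0}]$, giving the values $\ded\lambda$ and $(\ded\lambda)^{\aleph_0}$; the split between the endpoints is the analogue, inside the dependent world, of the superstable/strictly-stable split, with $\ded\lambda$ when the count concentrates in boundedly many formulas and $(\ded\lambda)^{\aleph_0}$ when one can combine $\aleph_0$ mutually independent order configurations. Note that $\ded\lambda\ge 2^{\aleph_0}$ absorbs any additive $2^{\aleph_0}$ term, which is why only two values (rather than three) appear here.

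If $T$ is stable, each $\phi$ is stable and contributes only $\le \lambda+2^{\aleph_0}$ many $\phi$-types, so the product bound already yields $f_T(\lambda)\le \lambda^{\aleph_0}$. The finer trichotomy comes from the stability spectrum via $\kappa(T)$ and total transcendence: when $T$ is totally transcendental every complete type over $M$ is pinned by a single formula of least Morley rank, so $|S(M)|=\lambda$ (value $\lambda$); when $T$ is superstable but not $\omega$-stable, forking has a finite basis, giving $|S(M)|\le \lambda+2^{\aleph_0}$ with the $2^{\aleph_0}$ realized already over a countable model (value $\lambda+2^{\aleph_0}$); and when $T$ is strictly stable, $\kappa(T)=\aleph_1$ lets one code a tree $2^{<\omega}$ into $M$ whose $\lambda^{\aleph_0}$ branches determine distinct types, matching the upper bound (value $\lambda^{\aleph_0}$).

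The main obstacle is the dependent–unstable dichotomy between $\ded\lambda$ and $(\ded\lambda)^{\aleph_0}$: establishing the per-formula bound $|S_\phi(M)|\le \ded\lambda$ for dependent $\phi$ and, more delicately, ruling out any value strictly between the two endpoints requires the finiteness analysis paralleling the superstable case but carried out for order configurations rather than for forking. By contrast the IP case is immediate from the cited Fact, and the stable trichotomy rests on the classical stability spectrum; it is the fine structure of the dependent unstable region, together with the lower-bound codings that realize each value inside one model of size $\lambda$, that carries the real weight.
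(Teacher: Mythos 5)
This statement is quoted in the paper as a \emph{Fact}, cited to \cite[II.4.12]{shelahfobook78} (see also \cite{keisler}); the paper gives no proof of it, so your proposal can only be measured against the cited sources. At the level of architecture it matches them: reduce to local types via the injection $p\mapsto\langle p_\phi\mid\phi\rangle$, giving $|S(M)|\le\prod_\phi|S_\phi(M)|\le\bigl(\sup_\phi|S_\phi(M)|\bigr)^{\aleph_0}$ for countable languages; dispose of the IP case with value $2^\lambda$; handle the stable case by the stability spectrum (totally transcendental $\mapsto\lambda$, superstable $\mapsto\lambda+2^{\aleph_0}$, strictly stable $\mapsto\lambda^{\aleph_0}$); and trap the unstable NIP case between $\ded\lambda$ and $(\ded\lambda)^{\aleph_0}$. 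This is indeed the standard decomposition.

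However, as a proof there is a genuine gap, and it sits exactly where the theorem's weight lies. First, the per-formula bound $|S_\phi(M)|\le\ded\lambda$ for NIP $\phi$ is not something you can read off the preceding Fact as quoted in this paper: that Fact is an equivalence characterizing IP and is stated under the hypothesis $2^\lambda>\ded\lambda$; the unconditional upper bound for NIP formulas is separate content (Shelah's II.4.10--II.4.11 machinery) which you invoke but do not prove. Second, and more seriously, trapping $f_T(\lambda)$ in the interval $[\ded\lambda,(\ded\lambda)^{\aleph_0}]$ for each $\lambda$ separately does not give the statement: the theorem asserts that $f_T$ equals one of the six functions \emph{uniformly in $\lambda$}, and since $\ded\lambda<(\ded\lambda)^{\aleph_0}$ is consistent, the two endpoints are genuinely distinct classes. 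Ruling out intermediate behavior --- showing that if $f_T(\lambda_0)>\ded\lambda_0$ for some $\lambda_0$ then a compactness/coding argument produces infinitely many independent order configurations forcing $f_T(\lambda)\ge(\ded\lambda)^{\aleph_0}$ for all $\lambda$ --- is the hard core of II.4.12, and your sketch explicitly defers it, offering only the unproved heuristic that the count ``concentrates in boundedly many formulas.'' Two minor slips: the strictly stable lower bound requires a tree ${}^{<\omega}\lambda$ (whose $\lambda^{\aleph_0}$ branches give distinct types), not $2^{<\omega}$, which has only $2^{\aleph_0}$ branches; and over a \emph{model}, definability of $\phi$-types for stable $\phi$ gives the cleaner bound $|S_\phi(M)|\le\lambda$, making your $\lambda+2^{\aleph_0}$ detour unnecessary. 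In short: a correct roadmap faithful to the cited proof, but with the decisive dichotomy asserted rather than established.
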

It is reasonable to propose the following definition:
\begin{mydef} 
Let $K$ be an AEC, $\lambda\geq LS(K)$. $K_\lambda$ has NIP if for all $M\in K_\lambda$, $|gS(M)|\leq \ded \lambda .$
   
\end{mydef}

At present it is unclear that we have discovered the ``correct'' notion. In fact, it is plausible that there are several different notions that are equivalent when $K$ is an elementary class, but distinct for some non-elementary $K$. One weakness of our definition is that unlike the corresponding first order notion, it is probably not absolute.

Grossberg raised the following question:

\begin{question}
Is there an equivalent notion which does not rely on extra set theoretic assumptions. (at least for AECs $K$ with $LS(K)=\aleph_0$ which are also $PC_{\aleph_0}$)?
\end{question}

\begin{question} \label{q3}
    Is there a global characterization of NIP?
\end{question}

\begin{fact}\cite[2.5.8]{jrsh875}\label{jardenshelah} Assume $K$ has JEP, AP and NMM. Suppose there is $S^{bs}\subseteq gS$ family of types on $K$ satisfying only (Density), (Invariance), and for all $M\in K_\lambda$, $|S^{bs}(M)|\leq \lambda^+$. See Definitions \ref{preframe} and \ref{w*-good}.

\begin{enumerate}
    \item If $\langle M_\alpha\in K_\lambda\mid \alpha<\lambda^+ \rangle$ is increasing and continuous, and there is a stationary set $S\subseteq \lambda^+$ such that for every $\alpha\in S$ and every model $N$, with $M_\alpha \leq_K N$, there is a type $p\in S^{bs}(M_\alpha)$ which is realized in $M_{\lambda^+}:=\bigcup_{i<\lambda^+} M_i$ and in $N$, then $M_{\lambda^+}$ is saturated in $\lambda^+$ above $\lambda$.
    \item For all $M\in K_\lambda$, $|gS(M)|\leq \lambda^+$.
\end{enumerate}

\end{fact}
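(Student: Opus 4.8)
My plan is to prove clause (1) first and then derive clause (2) from it. For clause (1), by Fact \ref{satmh} it suffices to show that $M_{\lambda^+}:=\bigcup_{\alpha<\lambda^+}M_\alpha$ is homogeneous in $\lambda^+$ for $\lambda$; resolving a target extension into a continuous chain of one-element steps, this reduces to the core task: given $M\leq_K M_{\lambda^+}$ with $\|M\|=\lambda$ and a Galois type $q\in gS(M)$, realize $q$ inside $M_{\lambda^+}$. Since $\|M\|=\lambda<\lambda^+=\mathrm{cf}(\lambda^+)$ and the chain is continuous, coherence gives $M\leq_K M_{\alpha_0}$ for some $\alpha_0$. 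I would then construct, by a back-and-forth of length $\lambda$ carried out inside $M_{\lambda^+}$, a $K$-embedding over $M$ of a fixed $\lambda$-sized model realizing $q$: at each successor step the current image lies below some $M_\beta$, one passes to a stage $\alpha\in S$ with $\alpha\geq\beta$ (legitimate since $S$ is stationary, hence unbounded), amalgamates the next one-element extension with $M_\alpha$ over the image to form some $N\geq_K M_\alpha$, and applies the hypothesis at $\alpha$ to locate the next point of the embedding within $M_{\lambda^+}$. (Invariance) is used to transport types along the partial embedding, and the two cited facts on coherent sequences supply the upper-bound types that let the construction pass through limit stages.

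The main obstacle is the gap between what the hypothesis provides and what saturation requires: the hypothesis returns \emph{some} basic type over $M_\alpha$ realized in $M_{\lambda^+}$, whereas realizing the prescribed $q$ demands steering the back-and-forth toward a designated element. The bridge is (Density): every proper one-element extension of the current model exposes a basic type over it, so the basic type returned by the hypothesis can be installed as the next step of a resolution that, over $\lambda$ stages, exhausts a model containing a realization of $q$. Ensuring that this resolution actually captures the target — rather than merely generating ever more basic points — is the delicate part, and I expect the genuinely hard step to be proving that the set of stages at which the construction could stall is nonstationary, which is precisely where stationarity of $S$, through a pressing-down (Fodor) argument, is needed in place of mere unboundedness.

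For clause (2), a direct enumeration of $gS(M)$ would be circular, since fitting all Galois types over $M$ into a model of size $\lambda^+$ already presupposes the bound sought. Instead I would build a single $M^\ast\in K_{\lambda^+}$ with $M\leq_K M^\ast$ that is saturated in $\lambda^+$ above $\lambda$, and then count: once $M^\ast$ is saturated, every $p\in gS(M)$ is realized in $M^\ast$, so sending each $c\in|M^\ast|$ to its Galois type over $M$ surjects $|M^\ast|$ onto $gS(M)$, giving $|gS(M)|\leq\|M^\ast\|=\lambda^+$. To obtain $M^\ast$, fix an increasing continuous chain $\langle M_\alpha\mid\alpha<\lambda^+\rangle$ with $M\leq_K M_0$, each $M_\alpha\in K_\lambda$ and each $M_\alpha<_K M_{\alpha+1}$ (by NMM), and use a bookkeeping bijection $\lambda^+\cong\lambda^+\times\lambda^+$ to schedule, for every $\beta<\lambda^+$ and every member of $S^{bs}(M_\beta)$, a later stage at which it is realized by one amalgamation step (AP); this is feasible exactly because $|S^{bs}(M_\beta)|\leq\lambda^+$, leaving only $\lambda^+$ tasks in total. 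The union $M^\ast$ then realizes every basic type over every $M_\beta$, so the hypothesis of clause (1) holds with $S=\lambda^+$ — for $\alpha<\lambda^+$ and $N>_K M_\alpha$, (Density) produces a basic type over $M_\alpha$ realized in $N$, and by construction it is realized in $M^\ast$ as well — and clause (1) yields that $M^\ast$ is saturated, as required.
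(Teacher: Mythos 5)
The paper does not actually prove this statement; it is imported as a Fact from \cite[2.5.8]{jrsh875}, so there is no in-paper argument to compare you against and your proposal must stand on its own. Your reduction of clause (2) to clause (1) does stand: realizing, by bookkeeping along a chain of length $\lambda^+$, every basic type over every model of a resolution (feasible precisely because $|S^{bs}(M_\beta)|\leq\lambda^+$), verifying the hypothesis of clause (1) with $S=\lambda^+$ via (Density), and then counting realizations in the resulting saturated model is the standard and correct route. One caveat: you need (Density) in the Jarden--Shelah sense (a basic type over $M_\alpha$ itself); the statement's pointer to Definitions \ref{preframe} and \ref{w*-good} does not actually define (Density), and the (Weak Density) of Definition \ref{w-good}, which only produces a basic type over some extension $M'\geq_K M_\alpha$, would not feed your bookkeeping or your verification of clause (1)'s hypothesis.

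Clause (1), however, is where the entire content of the theorem lives, and your proposal does not prove it. You correctly name the obstruction --- the hypothesis hands you \emph{some} basic type matched between $N$ and $M_{\lambda^+}$, with no control over which element of $N$ realizes it --- but you then defer its resolution to an unspecified ``pressing-down (Fodor) argument,'' without exhibiting a regressive function or the stationary set on which it is to be pressed down. Concretely, in your back-and-forth: at each step you can amalgamate the next one-element extension with some $M_\alpha$, $\alpha\in S$, and the hypothesis then matches \emph{one} element of the amalgam with an element of $M_{\lambda^+}$; but extending the partial embedding to cover a \emph{prescribed} element $b$ requires that the Galois type of $b$ over the current image be realized in $M_{\lambda^+}$, and that type is arbitrary --- it is exactly the saturation instance being proved. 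So ``locating the next point of the embedding'' is circular, and after $\lambda$ (or even $\lambda^+$) steps you have caught many elements, none of which need be the designated realization of $q$. The actual argument has to convert the inevitability-style hypothesis (every extension of $M_\alpha$ realizes a matched basic type) into realization of arbitrary types, via a catch-your-tail construction of length $\lambda^+$ interleaved cofinally with the given resolution, concluding through an isomorphism of $M_{\lambda^+}$ onto a suitable submodel of the limit; this is precisely the step your outline flags as ``delicate'' and then omits, so as written the proposal is an announcement of the difficulty rather than a proof of clause (1).
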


The following is an example of an AEC satisfying NIP that is not elementary or stable.

\begin{example}\cite[2.2.4]{jrsh875}\label{ex1}
Let $\lambda$ be a cardinal. Let $P$ be a family of $\lambda^+$ subsets of $\lambda$. Let $\tau := \{R_\alpha : \alpha < \lambda\} $ where each $R_\alpha$ is an unary predicate. Let $K$ be the class of models $M$ for $\tau$ such that for each $a\in |M|$, $\{ \alpha\in \lambda \mid M \models R_\alpha(a)\} \in P$. Note that $K$ is not elementary. Let $\leq_K$ be the substructure relation on $K$. The trivial $\lambda$-frame on $K_\lambda$ satisfies the axioms of a semi-good $\lambda$-frame\cite[2.1.3]{jrsh875}, so in particular by Fact \ref{jardenshelah} $K_\lambda$ satisfies NIP. On the other hand, it is unstable.
\end{example}

The next is an algebraic example of an AEC that satisfies NIP and is not elementary or stable.

\begin{example}\label{ex2} ($\ded \lambda=(\ded \lambda)^{\aleph_0}$)
Let $K$ be the class of divisible ordered abelian groups (denoted by $K'$) omitting the type of an infinitesimal element. That is, a model $G$ of this class $K$ is an Archimedean divisible ordered group (for each $x\in G$ and $n\in \mathbb N$, there is $y\in G$ such that $ny=x$). The class $K'$ (before omitting the type) admits quantifier elimination.  The order makes $K'$ unstable. Because $K'$ is NIP (in the sense of first order model theory), its number of syntactic types is bounded by $\ded \lambda$. Because of QE, the Galois types (viewing $K'$ as an AEC) agree with first order syntactic types. For every pretype in $K'$, denoted by $(a,M,N)$, such that $a$ is not infinitesimal and $M$ is archimedean, we can find $N'$ such that $a\in |N|$, $|M|\subseteq |N|$ by taking the divisible hull of $a\cup |M|$. Thus the Galois types in $K$ are no more than those in $K'$. Thus $K$ is an AEC that is NIP, unstable and non-elementary.
\end{example}

\begin{question} \label{q1}
    Are there other (perhaps more interesting than the previous two) examples of NIP that arise naturally in algebra or analysis?
\end{question}

\begin{mydef}\cite[VI.1.12(1)]{shelahaecbook2} We say $S_*$ is a \emph{$\leq_{K_\lambda}$-type-kind} when:
\begin{enumerate}
    \item $S_*$ is a function with domain $K_\lambda$.
    \item $S_*(M)\subseteq gS(M)$ for all $M\in K_\lambda$.
    \item $S_*(M)$ commutes with isomorphisms.
\end{enumerate}
\end{mydef}

\begin{mydef}\cite[VI.2.9]{shelahaecbook2}
\begin{enumerate}
    \item For $M\in K$ and $\Gamma\subseteq gS(M)$, $\Gamma$ is \emph{inevitable} if for all $N>_K M$ there is $a\in |N|-|M|$ with $\type(a/M,N)\in \Gamma$.
    \item For $M\in K$ and $\Gamma\subseteq gS(M)$, $\Gamma$ is \emph{$S_*$-inevitable} if for all $N>_K M$, if there is $p\in S_*(M)$ realized in $N$ then there is $q\in \Gamma$ realized in $N$.
\end{enumerate}

\end{mydef}

\begin{mydef}\cite[VI.1.12(2)]{shelahaecbook2} For $\leq_{K_\lambda}$-type-kinds $S_1$ and $S_2$, say $S_1$ is \emph{hereditarily in $S_2$} when: for $M\leq_K N$ and $p\in S_2(N)$ we have $p\restriction_M\in S_1(M)\implies p\in S_1(N)$. When $S_2$ is just $gS$ (all types) we omit ``in $S_2$'' and say $S_1$ is hereditary.
\end{mydef}

\begin{mydef}
Let $M\in K_\lambda$. $p\in gS(M)$ is \emph{$<\mu$-minimal} if for all $M\leq N\in K_\lambda$, $|\{q\in gS(N): q\restriction_M=p\}|<\mu$. $$S^{<\mu-min}(M):=\{p\in gS(M)\mid p\mbox{ is }<\mu \mbox{-minimal}\}.$$
\end{mydef}
\begin{remark}
  $S^{<\mu-min}$ and $S^{\lambda-al}$ (defined in Lemma \ref{allemma}) are hereditarily in $gS$.
\end{remark}

The following principle known as the weak diamond was introduced by Devlin and Shelah \cite{sh65}. 
\begin{defin}
Let $S\subseteq \lambda^+$ be a stationary set.  $\Phi_{\lambda^+}^{2}(S)$ holds if and only if for all $ F: (2^{\lambda}) ^{<\lambda^+} \rightarrow 2$  there exists $ g: \lambda^+ \rightarrow 2$ such that for all $ f: \lambda^+ \rightarrow 2^{\lambda}$ the set $\{ \alpha \in S : F(f \upharpoonright_{\alpha}) = g(\alpha) \}$ is stationary. 
\end{defin}

\begin{fact}\label{nonstructure2}\cite[VI.2.18]{shelahaecbook2}
($2^{\lambda} < 2^{\lambda^+}$)
Assume $K$ has amalgamation and no maximal model in $\lambda$.    If
\begin{enumerate}
\item $S_*$ is $\leq_{K_\lambda}$-type-kind and hereditary,
    \item $S_*\subseteq gS^{<\lambda^+-min}$,  and
    \item There is $M \in K_\lambda$ such that:
    \begin{enumerate}
    \item $|gS_*(M)|\geq \lambda^+$, and
    \item if $M \leq_K N\in K_\lambda$, no subset of $S_*(N)$ of size $\leq \lambda$ is $S_*$-inevitable,
\end{enumerate}     

\end{enumerate}
then $I(\lambda^+,K)=2^{\lambda^+}$.
\end{fact}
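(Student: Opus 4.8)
The plan is to read this as a non-structure theorem and prove it by producing, under $2^{\lambda}<2^{\lambda^+}$, a family of $2^{\lambda^+}$ pairwise non-isomorphic members of $K_{\lambda^+}$; since $2^{\lambda^+}$ is the largest possible value of $I(\lambda^+,K)$, this gives the conclusion. The driving tool is the weak diamond: by the Devlin--Shelah theorem, $2^\lambda<2^{\lambda^+}$ implies $\Phi_{\lambda^+}^2(S)$ for every stationary $S\subseteq\lambda^+$, and the prediction function it supplies will be used to defeat every candidate isomorphism among the models we build.

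First I would build a tree of models $\seq{M_\eta\mid \eta\in\fct{<\lambda^+}{2}}$ in $K_\lambda$, $\leq_K$-increasing and continuous along each branch (taking unions at limits), with $\|M_\eta\|=\lambda$ and with universes chosen as increasing subsets of $\lambda^+$ depending only on the length of $\eta$, so that each branch $\eta\in\fct{\lambda^+}{2}$ yields $M_\eta:=\bigcup_{\alpha<\lambda^+}M_{\eta\rest\alpha}\in K_{\lambda^+}$ (a member of $K$ by the AEC union axioms). The root is the model $M$ supplied by hypothesis (3), so $\lambda^+$ many $S_*$-types live over it. The branching exploits hypothesis (3)(b): at a node $M_\eta$ I keep a set $\Gamma_\eta\subseteq S_*(M_\eta)$ of size $\leq\lambda$ recording the $S_*$-types already committed to below $\eta$; picking a fresh $p\in S_*(M_\eta)\setminus\Gamma_\eta$ (available since (3)(a) gives $\lambda^+$ many), I let $M_{\eta\concat 1}$ realize $p$ and, using that the singleton $\{p\}$ is not $S_*$-inevitable, I let $M_{\eta\concat 0}$ omit $p$ while still realizing some $S_*$-type. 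Amalgamation and no maximal models make these successor and limit extensions possible inside $K_\lambda$; the hereditary property (1) propagates the distinguishing $S_*$-type coherently up the chain; and $<\lambda^+$-minimality (2) keeps the number of extensions of each such type below $\lambda^+$, which is what lets $\Gamma_\eta$ stay of size $\leq\lambda$ throughout.

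The heart of the argument is the weak-diamond diagonalization. I would define a coloring $F:(2^\lambda)^{<\lambda^+}\to 2$ that decodes from its argument $f\rest\alpha$ an approximation $(M_{\eta\rest\alpha},h\rest\alpha)$ to a model together with a partial map to another branch, and outputs the bit that a genuine isomorphism $h$ would be forced to follow at stage $\alpha$; feeding $F$ into $\Phi_{\lambda^+}^2(S)$ yields $g:\lambda^+\to 2$. I then select the branches so as to diverge from $g$ on $S$, using the realized/omitted type at each node as the witness of divergence. If some $M_\eta\cong M_\nu$ via $h$ for two selected branches, a closure argument shows $h$ restricts to an isomorphism $M_{\eta\rest\alpha}\to M_{\nu\rest\alpha}$ for club-many $\alpha$; coding $h$ as a suitable $f$, at stationarily many $\alpha\in S$ we get $F(f\rest\alpha)=g(\alpha)$, and at such $\alpha$ the $S_*$-type realized on one side of the branching but omitted on the other cannot be transported across $h$, a contradiction. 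A standard counting then extracts $2^{\lambda^+}$ pairwise non-isomorphic $M_\eta$, so $I(\lambda^+,K)=2^{\lambda^+}$.

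The main obstacle, I expect, is arranging the coloring $F$ and the divergence rule so that the weak-diamond prediction actually blocks isomorphisms at the level of the size-$\lambda^+$ unions rather than only locally: one must ensure that ``realizes an $S_*$-type that the other side omits'' is an isomorphism invariant that is not washed out by later stages of the construction. This is precisely where the hereditary property (keeping the distinguishing type meaningful at every level) and $<\lambda^+$-minimality (keeping its realizations sparse) must be combined with non-inevitability, so that the stage-$\alpha$ witness remains a genuine global difference between $M_\eta$ and $M_\nu$. Checking that the universe bookkeeping and the sets $\Gamma_\eta$ can be maintained simultaneously at size $\lambda$ through all $\lambda^+$ stages is the delicate-but-routine remainder.
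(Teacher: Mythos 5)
This statement is quoted in the paper as a Fact with a citation to \cite[VI.2.18]{shelahaecbook2}; the paper contains no proof of it, so the only meaningful comparison is with the standard Shelah-style argument, and your sketch does follow that architecture (a binary tree of models in $K_\lambda$ branching on realize/omit of $S_*$-types, defeated isomorphisms via $\Phi^2_{\lambda^+}$). Two steps, however, are genuinely underspecified rather than merely terse.

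First, the persistence of omission. You apply non-inevitability once, to the singleton $\{p\}$, at the splitting node, to get $M_{\eta\concat 0}$ omitting $p$. That is not enough: an element realizing $p$ over $M_\eta$ could be added at any later stage along the $0$-branch, and then the ``distinguishing invariant'' between the two sides evaporates. What the hypotheses are actually for is the following loop, which must run at \emph{every} subsequent successor stage $N$ along the branch: form $\Gamma_N:=\{q\in gS(N)\mid q\rest_{M_{\eta\rest\beta}}\in\Gamma_{\eta\rest\beta}\text{ for some committed }\beta\}$; heredity puts $\Gamma_N\subseteq S_*(N)$, $<\lambda^+$-minimality bounds it by $\lambda$, and only then does hypothesis (3)(b) (applied to $\Gamma_N$, not to $\{p\}$) yield an extension realizing some $S_*$-type while omitting all of $\Gamma_N$; since Galois types over a fixed base are preserved upward under amalgamation, omitting $\Gamma_N$ at every stage is what guarantees the union omits the originally committed types. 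You name the obstacle but do not carry out this step, and it is where conditions (1) and (2) do their work.

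Second, ``a standard counting then extracts $2^{\lambda^+}$ pairwise non-isomorphic $M_\eta$'' conceals the hardest part of the theorem. A single application of $\Phi^2_{\lambda^+}(S)$ as defined in the paper shows that not all branches give isomorphic models; obtaining the full value $I(\lambda^+,K)=2^{\lambda^+}$ requires Shelah's dedicated ``many models from the weak diamond'' machinery (e.g.\ partitioning $\lambda^+$ into $\lambda^+$ stationary sets each carrying the weak diamond, and a separate argument bounding the size of each isomorphism class of branches). As written, your argument establishes at best $I(\lambda^+,K)>1$ (or $>2^\lambda$ with more care), not $2^{\lambda^+}$.
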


\begin{fact}\label{nonstructure3}\cite[VI.2.11(2)]{shelahaecbook2}\footnote{A complete argument of this result does not appear in \cite{shelahaecbook2}. A sketch of the argument can be found in a forthcoming paper with Marcos Mazari-Armida using Sebastien Vasey's argument in \cite{vasey_invitation}}
For every $M\in K_\lambda$ we have $|S_*(M)|\leq \lambda$ when:
\begin{enumerate}
    \item $K$ has AP in $\lambda$, and
    \item $S_*$ is a hereditary $\leq_{K_\lambda}$-type-kind in $gS$, and
    \item For every $M\in K_\lambda$ there is an $S_*$-inevitable $\Gamma_M\subseteq gS(M)$ of cardinality $\leq \lambda$.
\end{enumerate}
\end{fact}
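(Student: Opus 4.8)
The plan is to read Fact \ref{nonstructure3} as the structure-side complement of the non-structure result Fact \ref{nonstructure2}, and to prove it by contraposition: assuming (1)--(2), I show that if some $M^*\in K_\lambda$ has $|S_*(M^*)|\geq \lambda^+$, then hypothesis (3) must fail, i.e.\ there is a model $M'\in K_\lambda$ carrying \emph{no} $S_*$-inevitable $\Gamma\subseteq gS(M')$ with $|\Gamma|\leq\lambda$. It is worth noting at the outset why the naive strategies do not work and hence why the full strength of (3) (a small inevitable family at \emph{every} model) is needed: a single model of size $\lambda$ realizes at most $\lambda$ types, so one cannot hope to realize all of $S_*(M^*)$ in one $N\in K_\lambda$; and $S_*$-inevitability of a family over $M^*$ alone is too weak to bound $|S_*(M^*)|$, since every proper extension may satisfy it by realizing a single $\Gamma$-type while still realizing arbitrarily many types off $\Gamma$. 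So the argument must exploit inevitability at many models simultaneously, which the contrapositive does by defeating inevitability at one well-chosen $M'$.

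First I would observe that the surplus of $S_*$-types propagates upward: for any $M'\geq_K M^*$ and any $p\in S_*(M^*)$, any $q\in gS(M')$ with $q\restriction_{M^*}=p$ lies in $S_*(M')$ by hereditariness (hypothesis (2)), and $q\mapsto q\restriction_{M^*}$ is injective on such choices, so $|S_*(M')|\geq |S_*(M^*)|\geq\lambda^+$. Fixing such an $M'$ and an arbitrary candidate $\Gamma\subseteq gS(M')$ with $|\Gamma|\leq\lambda$, the goal becomes to produce $N>_K M'$ that realizes a type in $S_*(M')$ yet omits every member of $\Gamma$. The engine is a transfinite amalgamation of length $\lambda$ inside $K_\lambda$: using AP in $\lambda$ (hypothesis (1)) I build an increasing continuous chain $\langle N_i\mid i<\lambda\rangle$ with $N_0=M'$, at successor steps realizing by a new element a type chosen freshly from $S_*(M')\setminus\Gamma$, and setting $N=\bigcup_{i<\lambda}N_i\in K_\lambda$. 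Hereditariness keeps the restrictions to $M'$ of the realized types inside $S_*$, so $N$ genuinely realizes $S_*$-types, and the bound $|S_*(M')|\geq\lambda^+>|\Gamma|$ guarantees that at each of the $\lambda$ steps a new $S_*$-type off $\Gamma$ is available.

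The main obstacle---and the reason a complete argument is deferred to the forthcoming work adapting Vasey's technique---is controlling omission, since a general AEC has no omitting-types theorem. I must arrange the construction so that \emph{no} element ever acquires a type in $\Gamma$ over $M'$, in particular at limit stages and at the auxiliary elements that amalgamation forces into each $N_i$, whose types over $M'$ are not directly chosen. I would handle this by organizing the extensions as a tree of models indexed by the $\leq\lambda$ types of $\Gamma$ together with the selected $S_*$-types, and at each node diagonalizing away from $\Gamma$ using the abundance of available $S_*$-types---crucially without invoking tameness or locality, which Fact \ref{nonstructure3} does not assume. Once such an $N$ is produced it witnesses that $\Gamma$ is not $S_*$-inevitable; as $\Gamma$ ranged over all $\leq\lambda$-sized candidates over $M'$, hypothesis (3) fails at $M'$, completing the contrapositive and hence the proof.
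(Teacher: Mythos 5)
The paper does not actually prove this statement: it is quoted as \cite[VI.2.11(2)]{shelahaecbook2}, and the attached footnote says explicitly that a complete argument does not appear in Shelah's book and is deferred to a forthcoming paper using Vasey's argument. So there is no in-paper proof to compare yours against, and your proposal has to stand on its own. It does not: there is a genuine gap, and you have located it yourself. Your plan is to prove the contrapositive by taking each candidate $\Gamma\subseteq gS(M')$ with $|\Gamma|\leq\lambda$ and constructing $N>_K M'$ that realizes a type of $S_*(M')$ while omitting every type in $\Gamma$. The preliminary steps (using $\lambda$-AP to extend types to larger models and hereditariness to keep the extensions in $S_*$) are fine, but the construction you describe only arranges realization, never omission: the chain $\langle N_i\mid i<\lambda\rangle$ realizes $\lambda$ many types of $S_*(M')\setminus\Gamma$, yet nothing prevents the realizing elements themselves, the elements forced in by amalgamation, or elements appearing at limit stages from realizing types of $\Gamma$ over $M'$. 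In a general AEC with only $\lambda$-amalgamation there is no omitting-types theorem for Galois types, and the sentence about ``organizing the extensions as a tree of models \ldots diagonalizing away from $\Gamma$'' is not a construction; it is a restatement of the problem. The missing step is the entire mathematical content of the Fact.

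There is also a structural reason this route is unlikely to close. If a candidate $\Gamma$ actually were $S_*$-inevitable, then by definition no witness $N$ of the kind you want exists, so no construction can produce one; to run your argument you would already need to know that no small $S_*$-inevitable family lives over $M'$, which is essentially what is to be proved. The expected proof goes in the opposite direction: it uses hypothesis (3) positively, e.g.\ by iterating amalgamation along a chain $\langle M_i\rangle$ to build a single model of size $\lambda$ realizing the inevitable families $\Gamma_{M_i}$ and then arguing that every type of $S_*(M)$ is captured there, so that $|S_*(M)|\leq\lambda$ by counting; this is the shape of the argument the footnote points to, and it is also how the Fact is deployed in Lemma \ref{allemma} as the structure-side complement of Fact \ref{nonstructure2} (a pairing you correctly identify in your opening paragraph). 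Your surplus of $\lambda$ many distinct realizations is in any case beside the point: defeating inevitability needs only one realized $S_*$-type together with total omission of $\Gamma$, and it is the omission you cannot supply.
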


\section{The w*-good frame}
In this section we define w*-good frames, and show that $K_\lambda$ has NIP if and only if $K$ has a w*-good $\lambda$-frame under additional assumptions. We work with an AEC $K$ and $\lambda\geq LS(K)$.

\begin{mydef}\label{preframe} \cite[III.0]{shelahaecbook}
Let $\lambda<\mu$, where $\lambda$ is a cardinal, and $\mu$ is a cardinal or $\infty$. A \emph{pre-$[\lambda,\mu)$-frame} is a triple $\mathfrak s=(K,\nf{},S^{bs})$ such that:
\begin{enumerate}
    \item $K$ is an AEC with $\lambda\geq LS(K)$ and $K_\lambda\neq \emptyset$.
    \item $S^{bs}\subseteq \bigcup_{M\in K_{[\lambda,\mu)}}gS(M)$. Let $S^{bs}(M):=gS(M)\cap S^{bs}$. Types in this family are called \emph{basic types}.
    \item $\nf$ is a relation on quadruples $(M_0,M_1,a,N)$, where $M_0\leq_K M_1\leq N$, $a\in |N|$ and $M_0,M_1,N\in K_{[\lambda,\mu)}$. We write $\indep{M_0}{a}{M_1}{N}$, or we say $\type(a/M_1,N)$ does not fork over $M_0$ when the relation $\nf$ holds for $(M_0,M_1,a,N)$.
    \item (Invariance) If $f:N\cong N'$ and $\indep{M_0}{a}{M_1}{N}$, then $\indep{f[M_0]}{f(a)}{f[M_1]}{N'}$. If $\type(a/M_1,N)\in S^{bs}(M_1)$, then $\type(f(a)/f[M_1],N')\in S^{bs}(f[M_1])$.
    \item (Monotonicity) If $\indep{M_0}{a}{M_1}{N}$ and $M_0\leq_K M_0'\leq_K M_1'\leq_K M_1\leq_K N'\leq_K N\leq_K N''$ with $N''\in K_{[\lambda,\mu)}$ and $a\in |N'|$, then $\indep{M_0'}{a}{M_1'}{N'}$ and $\indep{M_0'}{a}{M_1'}{N''}$.
    \item (Non-forking Types are Basic) If $\indep{M}{a}{M}{N}$ then $\type(a/M,N)\in S^{bs}(M).$
\end{enumerate}
\end{mydef}

\begin{mydef}\cite[3.6]{mazariwgood}\label{w-good}
A pre-$[\lambda,\mu)$-frame $\mathfrak s= (K,\nf, S^{bs})$ is a \emph{w-good frame} if:
\begin{enumerate}
    \item $K_{[\lambda,\mu)}$ has AP, JEP and NMM.
    \item (Weak Density) For all $M<_K N\in K_\lambda$, there is $a\in |N|-|M|$ and $M'\leq N'\in K_{[\lambda,\mu)}$ such that $(a,M,N)\leq (a,M',N')$ and $\type(a/M',N')\in S^{bs}(M')$.
    \item (Existence of Non-Forking Extension) If $p\in S^{bs}(M)$ and $M\leq_K N$, then there is $q\in S^{bs}(N)$ extending $p$ which does not fork over $M$.
    \item (Uniqueness) If $M\leq_K N$ both in $K_{[\lambda,\mu)}$, $p,q\in S^{bs}(N)$ both do not fork over $M$, and $p\restriction_M=q\restriction_M$, then $p=q$.
    \item (Strong Continuity\footnote{This was called just continuity in \cite{mazariwgood}. The author would like to thank Marcos Mazari-Armida for pointing out that the continuity axiom of a good frame requires only the moreover part.}) If $\delta<\mu$ a limit ordinal, $\langle M_i\mid i\leq \delta\rangle$ increasing and continuous, $\langle p_i\in S^{bs}(M_i)\mid i<\delta \rangle$, and $i<j<\delta$ implies $p_j\restriction M_i=p_i$, and $p_\delta\in S(M_\delta)$ is an upper bound for $\langle p_i\mid i<\delta\rangle$, then $p_\delta\in S^{bs}(M_\delta)$. Moreover, if each $p_i$ does not fork over $M_0$ then neither does $p_\delta$.
\end{enumerate}
\end{mydef}

\begin{mydef}\label{w*-good}
A pre-$[\lambda,\mu)$-frame $\mathfrak s=(K,\nf,S^{bs})$ is a \emph{w*-good} frame if $\mathfrak s$ satisfies:
\begin{enumerate}
    \item $K_{[\lambda,\mu)}$ has AP, JEP and NMM.

    \item (Uniqueness). See Definition \ref{w-good}.
    \item (Basic NIP) For all $M\in K_{[\lambda,\mu)}$ $|S^{bs}(M)|\leq \ded \|M\|$.
    \item (Few Non-Basic Types) For all $M\in K_{[\lambda,\mu)}$, $|gS(M)-S^{bs}(M)|\leq \lambda.$
    \item (Continuity\footnote{This is the continuity axiom for good frames.}) Let $\delta<\mu$ be a limit ordinal, $\langle M_i\mid i\leq \delta\rangle$ increasing and continuous, $\langle p_i\in S^{bs}(M_i)\mid i<\delta \rangle$, and $i<j<\delta$ implies $p_j\restriction_{M_i}=p_i$, and $p_\delta\in gS(M_\delta)$ is an upper bound for $\langle p_i\mid i<\delta\rangle$. If each $p_i$ does not fork over $M_0$ then $p_\delta\in S^{bs}(M_\delta)$ and $p_\delta$ also does not fork over $M_0.$
    \item (Transitivity) if $p\in S^{bs}(M_2)$ does not fork over $M_1\leq_K M_2$, and $p\restriction_{M_1}$ does not fork over $M_0\leq_K M_1$, then $p$ does not fork over $M_0$.
    
\end{enumerate}
\end{mydef}

Although the author cannot find a proof or counterexample, w-good and w*-good frames are likely to be incomparable.

\begin{remark}
(Continuity) is weaker than (Strong Continuity). Without not forking over $M_0$ one cannot deduce that $p_\delta\in S^{bs}(M_\delta).$
\end{remark}

\begin{remark}
  In a w-good frame (Transitivity) is implied by several other properties including (Existence of Non-Forking Extension). For a w*-good frame, where (Existence of Non-Forking Extension) does not hold in general, we need to explicitly include (Transitivity) as an axiom.
\end{remark}

\begin{mydef}
When $\mu=\lambda^+$ in the previous definitions, we say $\mathfrak s$ is a pre-/w-good/w*-good $\lambda$-frame.
\end{mydef}

From now on we build a w*-good $\lambda$-frame on $K$ assuming the following:

\begin{hypothesis}[$2^{\lambda^+}>2^\lambda$]
We fix $K$ an AEC and a cardinal $\lambda\geq LS(K)$ such that $K$ is categorical in $\lambda$. Further more $1\leq I(\lambda^+,K)<2^{\lambda^+}$, and $K_\lambda$ has NIP.
\end{hypothesis}

As $K$ is categorical in $\lambda$, then $K$ has $\lambda$-AP by the following fact, which appeared in \cite[3.5]{sh88} first, and a clearer proof can be found in \cite[4.3]{Gclassification}. $\lambda$-JEP follows from categoricity, and $\lambda$-NMM follows from categoricity and $K_{\lambda^+}\neq \emptyset.$

\begin{fact}\cite[3.5]{sh88}
($2^\lambda<2^{\lambda^+}$) If $I(\lambda, K)=1\leq I(\lambda^+,K)<2^{\lambda^+}$, then $K$ has the $\lambda$-AP.
\end{fact}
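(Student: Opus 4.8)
The plan is to argue by contraposition: assuming $K$ fails the $\lambda$-amalgamation property, I will produce $2^{\lambda^+}$ pairwise non-isomorphic models in $K_{\lambda^+}$, so that $I(\lambda^+,K)=2^{\lambda^+}$, contradicting the hypothesis $I(\lambda^+,K)<2^{\lambda^+}$. The combinatorial engine is the weak diamond $\Phi_{\lambda^+}^2$, which holds under the assumption $2^\lambda<2^{\lambda^+}$ by the Devlin--Shelah theorem \cite{sh65}. Categoricity in $\lambda$ is what makes the construction uniform: every model appearing at an intermediate stage has size $\lambda$ and is therefore isomorphic to the single model of $K_\lambda$, so the amalgamation obstruction can be transported to each stage; and $I(\lambda^+,K)\geq 1$ gives $K_{\lambda^+}\neq\emptyset$, hence (with categoricity) no maximal models in $\lambda$, so the chains below can always be continued.

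First I would extract a local branching device from the failure of amalgamation. If $\lambda$-AP fails there are $M_0\leq_K M_1,M_2$ in $K_\lambda$ that cannot be amalgamated over $M_0$. Using categoricity, for any $N\in K_\lambda$ and any copy of $M_0$ inside $N$ I obtain two one-step extensions $N^{(0)},N^{(1)}\in K_\lambda$ of $N$ which inherit this obstruction and so cannot be jointly $K$-embedded over $N$. I would then build, by induction on $\alpha<\lambda^+$, increasing continuous chains in $K_\lambda$ whose successor steps are governed by a single bit, taking $N_{\alpha+1}\in\{N_\alpha^{(0)},N_\alpha^{(1)}\}$; by continuity of the chain each union $N_{\lambda^+}=\bigcup_{\alpha<\lambda^+}N_\alpha$ lies in $K_{\lambda^+}$.

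The heart of the argument, and the step I expect to be the main obstacle, is to use $\Phi_{\lambda^+}^2$ to rule out isomorphisms between models produced by different bit sequences. I would arrange a tree $\langle N_\eta\mid \eta\in 2^{<\lambda^+}\rangle$ of models in $K_\lambda$, increasing continuous along branches, with $N_{\eta\frown 0}$ and $N_{\eta\frown 1}$ the two incompatible extensions of $N_\eta$, and set $N_\eta=\bigcup_\alpha N_{\eta\restriction\alpha}$ for a branch $\eta\in 2^{\lambda^+}$. A candidate isomorphism $h\colon N_\eta\cong N_\nu$, together with the models built up to stage $\alpha$, can be coded stage by stage as a function $f\colon\lambda^+\to 2^\lambda$, since each model of size $\lambda$ and each partial map on it admits $\leq 2^\lambda$ codes. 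I define a colouring $F\colon(2^\lambda)^{<\lambda^+}\to 2$ so that $F(f\restriction\alpha)$ is a bit $b$ for which choosing the $b$-th incompatible extension blocks $h$ from continuing past stage $\alpha$; such a $b$ exists precisely because amalgamation fails, as otherwise $h$ would amalgamate $N_\alpha^{(0)}$ and $N_\alpha^{(1)}$ over $N_\alpha$. Weak diamond then returns $g\colon\lambda^+\to 2$ defeating every such $f$ on a stationary set, and organizing the branching against $g$ yields $2^{\lambda^+}$ pairwise non-isomorphic unions. The delicate points, which are the technical core of the Devlin--Shelah method, are coding the growing models and candidate maps into $2^\lambda$ coherently along the chain, and arranging the stationary-set diagonalization so that it genuinely separates $2^{\lambda^+}$ isomorphism types; this is exactly what ties the result to the cardinal arithmetic hypothesis $2^\lambda<2^{\lambda^+}$ rather than to ZFC alone.
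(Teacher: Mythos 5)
The paper states this as a Fact, citing Shelah \cite{sh88} (and noting a clearer write-up in \cite{Gclassification}) rather than proving it, and your outline is precisely that argument: contraposition, transporting the failure of $\lambda$-amalgamation via $\lambda$-categoricity to build a tree of models indexed by ${}^{<\lambda^+}2$, and the Devlin--Shelah weak diamond $\Phi^{2}_{\lambda^+}$ (available exactly because $2^\lambda<2^{\lambda^+}$) to defeat candidate isomorphisms between branches. The points you flag as delicate --- coding the growing models and partial maps into $2^\lambda$ coherently, and running the diagonalization over $\lambda^+$ pairwise disjoint stationary sets so as to separate $2^{\lambda^+}$ isomorphism types rather than merely two --- are indeed where the technical work lies, but your decomposition and key ideas match the cited proof.
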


\begin{mydef}
$p=\type(a/M,N)$ has the extension property if for every $K$-embedding $f:M\to M_1\in K_\lambda$ there is $q\in gS(M_1)$ extending $f(p)$.
\end{mydef}

\begin{mydef}
$p=\type(a/M,N)$ is \emph{$\lambda$-unique}\footnote{This notion was first introduced by Shelah in \cite[6.1]{Sh:48}, called minimal types there. Note that this is a different notion from the minimal types of \cite{sh576}. These types are also called \emph{quasiminimal types} in the literature, see for example \cite{zilber} and \cite{les}}.  if
\begin{enumerate}
    \item $p=\type(a/M,N)$ has the extension property, and 
    \item for every $M\leq_K M' \in K_\lambda$, $p$ has at most one extension $q\in gS(M')$ with the extension property.
\end{enumerate}
\end{mydef}

\begin{fact}\cite[VI.2.5(2B)]{shelahaecbook2}\label{extension}
If $K_\lambda$ has AP and $\lambda\geq LS(K)$, $\type(a,M,N)$ has $\geq \lambda^+$ realizations in some extension of $M$ (necessarily in $K_{\geq\lambda^+})$ if and only if $ \type (a/M,N)$ has the extension property. 
\end{fact}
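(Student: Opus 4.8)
The plan is to prove both implications, keeping in mind the key structural point that, by the definition of $K^3_\lambda$, a Galois type in $gS(M_1)$ is always realized by an element \emph{outside} $M_1$. Hence the extension property is not the statement that $f(p)$ extends to some type over $M_1$ — under $\lambda$-AP that is trivially true, by amalgamating a witness of $p$ with $M_1$ over $M$ — but rather the nontrivial assertion that $f(p)$ always admits a \emph{non-algebraic} extension, i.e. one realized by a genuinely new element. Read this way, the extension property is a ``bigness'' condition, and its equivalence with having $\geq\lambda^+$ realizations becomes a combination of a chain construction and a pigeonhole argument.

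First I would prove that the extension property implies $\geq\lambda^+$ realizations. I build an increasing continuous chain $\langle M_i\mid i<\lambda^+\rangle$ in $K_\lambda$ with $M_0=M$ as follows. At a successor step, apply the extension property to the inclusion $M\hookrightarrow M_i$ to obtain $q_i\in gS(M_i)$ with $q_i\restriction_M=p$, and realize $q_i$ by some $a_i$ in an extension $M_{i+1}\geq_K M_i$ in $K_\lambda$. Since $q_i\in gS(M_i)$, the witness $a_i$ lies outside $M_i$, hence outside every $M_j$ with $j\leq i$, so the $a_i$ are pairwise distinct. At limits I take unions, which remain in $K_\lambda$. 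Setting $N^*:=\bigcup_{i<\lambda^+}M_i\in K_{\lambda^+}$ and using that Galois types are preserved under enlarging the ambient model (monotonicity, via the identity embeddings witnessing $E_{at}$), each $a_i$ realizes $p$ over $M$ in $N^*$; thus $p$ has $\geq\lambda^+$ realizations, necessarily in a model of size $\geq\lambda^+$.

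For the converse I would argue directly. Suppose $p$ has $\geq\lambda^+$ realizations in some $N^*\geq_K M$, and let $f\colon M\to M_1\in K_\lambda$ be given. By invariance I first reduce to the case that $f$ is an inclusion: renaming, I transport $N^*$ to a model $\hat N\geq_K f[M]$ realizing $f(p)$ at least $\lambda^+$ times. Now I amalgamate $\hat N$ and $M_1$ over $f[M]$ using $\lambda$-AP, obtaining $P\geq_K M_1$ into which the $\geq\lambda^+$ realizations of $f(p)$ embed as distinct realizations of $f(p)$ over $f[M]$ in $P$. Since $\|M_1\|=\lambda$, these cannot all lie in the image of $M_1$; choosing one, say $c$, outside $M_1$ gives $\type(c/M_1,P)\in gS(M_1)$ whose restriction to $f[M]$ is $f(p)$. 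This is the desired non-algebraic extension, so $p$ has the extension property.

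The step I expect to be the main obstacle is the bookkeeping in the converse: correctly reducing an arbitrary $K$-embedding $f$ to the inclusion case via invariance, and verifying that amalgamating over $f[M]$ preserves the relevant Galois type so that the pigeonhole genuinely produces an extension of $f(p)$ rather than of some conjugate. The AP hypothesis is used crucially here to place the $\geq\lambda^+$ realizations and $M_1$ inside one model; in the forward direction the only subtlety is recognizing that membership of $q_i$ in $gS(M_i)$ automatically forces each new realization to be distinct, which is exactly what lets the construction run for the full $\lambda^+$ steps.
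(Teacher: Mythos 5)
The paper states this result as a Fact imported from \cite[VI.2.5(2B)]{shelahaecbook2} and gives no proof of it, so there is no in-paper argument to compare against; I am therefore assessing your proposal against the standard proof of this equivalence. Your reading of the statement is correct: since $gS(M_1)$ consists only of types of elements outside $M_1$, the extension property is exactly the assertion that $f(p)$ always admits a non-algebraic extension. Your forward direction is sound: the chain $\langle M_i\mid i<\lambda^+\rangle$ with a new realization $a_i\in |M_{i+1}|-|M_i|$ added at each successor step produces $\lambda^+$ pairwise distinct realizations of $p$ in $\bigcup_{i<\lambda^+}M_i\in K_{\lambda^+}$, and no amalgamation is needed there.

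The one genuine gap is in the converse, at the line where you ``amalgamate $\hat N$ and $M_1$ over $f[M]$ using $\lambda$-AP.'' The model $\hat N$ has cardinality at least $\lambda^+$, so this is not an instance of $\lambda$-AP, which only amalgamates triples of models all lying in $K_\lambda$. The step is true but requires an argument: first cut $\hat N$ down to a $\leq_K$-submodel $N'\in K_{\lambda^+}$ containing $f[M]$ together with $\lambda^+$ of the realizations (L\"owenheim--Skolem plus monotonicity of Galois types), then write $N'=\bigcup_{i<\lambda^+}N_i$ as an increasing continuous resolution with $N_0=f[M]$ and build the amalgam by transfinite induction, applying $\lambda$-AP at each successor stage to $N_i\leq_K N_{i+1}$ and the current amalgam $P_i\in K_\lambda$, taking unions at limits; each $P_i$ for $i<\lambda^+$ still has cardinality $\lambda$, so the induction goes through and $P:=\bigcup_{i<\lambda^+}P_i$ is the desired amalgam. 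With that supplied, your pigeonhole argument ($\lambda^+$ distinct images cannot all land in $|M_1|$, which has cardinality $\lambda$) correctly finishes the proof; note only that to place the resulting type in $gS(M_1)$ as the paper defines it, you must restrict $P$ back down to a model in $K_\lambda$ containing $M_1$ and the chosen realization, which is routine.
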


Now we define the w*-good $\lambda$-frame.
\begin{mydef} The preframe $\mathfrak s_{\lambda-unq}$ is defined such that:
\begin{enumerate}
    \item $S^{bs}(M):=\{p=\type(a/M,N)\mid p \mbox{ has the extension property}\}$. \item $p=\type(a/M,N)\in S^{bs}(M)$ does not fork over $M_0\leq_K M$ if $p\restriction_{M_0}$ is $\lambda$-unique. 
\end{enumerate}

\end{mydef}

\begin{lem}\label{allemma}
$S^{\lambda-al}(M):=\{p\in gS(M)\mid p\mbox{ has }\leq \lambda\mbox{-many realizations}\}$ satisfies $|S^{\lambda-al}(M)|\leq \lambda$. By realizations we mean realizations in any $\leq_K$-extension of $M$ in $K_{\lambda^+}$. So $\mathfrak s_{\lambda-unq}$ satisfies (Few Non-Basic Types).
\end{lem}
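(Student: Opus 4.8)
The plan is to recognize $S^{\lambda-al}$ as the family of non-basic types and then to bound its size by feeding it into the non-structure machinery of Facts~\ref{nonstructure2} and~\ref{nonstructure3}, using categoricity in $\lambda$ to supply the one global hypothesis that is otherwise awkward to verify. First I would pin down what $S^{\lambda-al}(M)$ is. By Fact~\ref{extension}, $p\in gS(M)$ has the extension property if and only if $p$ has $\geq\lambda^+$ realizations in some extension; negating, $p$ fails the extension property exactly when $p$ has $\leq\lambda$ realizations in every extension in $K_{\lambda^+}$, i.e. exactly when $p\in S^{\lambda-al}(M)$. Since $S^{bs}(M)$ consists precisely of the types with the extension property, this yields $S^{\lambda-al}(M)=gS(M)-S^{bs}(M)$, so the asserted bound $|S^{\lambda-al}(M)|\leq\lambda$ is literally (Few Non-Basic Types).

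Next I would check that $S_*:=S^{\lambda-al}$ satisfies the hypotheses shared by both non-structure facts. It is a $\leq_{K_\lambda}$-type-kind: its domain is $K_\lambda$, $S_*(M)\subseteq gS(M)$, and it commutes with isomorphisms since the number of realizations is isomorphism-invariant. It is hereditary in $gS$: any realization of $q\in gS(N)$ is a realization of $q\restriction_M$, so $\leq\lambda$ realizations of $q\restriction_M$ force $\leq\lambda$ realizations of $q$. Finally $S_*\subseteq gS^{<\lambda^+-min}$: if some $p\in S_*(M)$ had $\geq\lambda^+$ distinct extensions to a single $N\in K_\lambda$, I would realize all of them by amalgamating the witnessing extensions over $N$ into one $N'\in K_{\lambda^+}$; distinct extensions are realized by distinct elements, each of which realizes $p$ over $M$, giving $\geq\lambda^+$ realizations of $p$ and contradicting $p\in S_*$.

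The heart of the proof is a dichotomy. Call $M\in K_\lambda$ \emph{good} if some $\Gamma_M\subseteq gS(M)$ of size $\leq\lambda$ is $S_*$-inevitable. Observe that $S_*(M)$ is always $S_*$-inevitable (a realized $S_*$-type witnesses itself), so $|S_*(M)|\leq\lambda$ implies $M$ is good; contrapositively, a non-good model has $|S_*(M)|\geq\lambda^+$. Now suppose toward a contradiction that $|S_*(M_0)|\geq\lambda^+$ for some $M_0$. Since the standing hypotheses of Fact~\ref{nonstructure3} (AP in $\lambda$, and $S_*$ a hereditary type-kind in $gS$) hold, its contrapositive shows that not every model can be good, so some model is non-good. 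Here categoricity in $\lambda$ enters decisively: goodness is isomorphism-invariant and all members of $K_\lambda$ are isomorphic, so \emph{every} $M\in K_\lambda$ is non-good. This is exactly what makes Fact~\ref{nonstructure2} applicable: fixing any $M$, every $N\geq_K M$ lies in $K_\lambda$ and is non-good, hence no $\leq\lambda$-sized subset of $S_*(N)\subseteq gS(N)$ is $S_*$-inevitable (condition 3(b)), while non-goodness of $M$ gives $|S_*(M)|\geq\lambda^+$ (condition 3(a)). With $2^\lambda<2^{\lambda^+}$, Fact~\ref{nonstructure2} then yields $I(\lambda^+,K)=2^{\lambda^+}$, contradicting the hypothesis $I(\lambda^+,K)<2^{\lambda^+}$. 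Therefore no such $M_0$ exists, i.e. $|S_*(M)|\leq\lambda$ for all $M\in K_\lambda$, and combined with the first paragraph this is (Few Non-Basic Types).

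The main obstacle is precisely condition 3(b) of Fact~\ref{nonstructure2}, which quantifies over all extensions $N\geq_K M$ and cannot be verified extension-by-extension from data about $M$ alone; the categoricity trick of propagating non-goodness from a single model to all of $K_\lambda$ by isomorphism-invariance is what renders it available, and I expect this to be the one genuinely essential use of categoricity in the argument. A secondary point requiring care is the bookkeeping in the $gS^{<\lambda^+-min}$ verification, where I must ensure that the $\lambda^+$ distinct extension-types really do amalgamate into $\geq\lambda^+$ distinct realizations inside a single model of size $\lambda^+$, so that the contradiction with $p\in S_*$ is clean.
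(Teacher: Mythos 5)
Your proof is correct and follows essentially the same route as the paper's: assume $|S^{\lambda-al}(M)|\geq\lambda^+$, use Fact~\ref{nonstructure3} to rule out small inevitable families, and then feed $S^{\lambda-al}$ into Fact~\ref{nonstructure2} to contradict $1\leq I(\lambda^+,K)<2^{\lambda^+}$. You are in fact somewhat more careful than the paper's terse argument, which leaves implicit the verification that $S^{\lambda-al}$ is a hereditary type-kind contained in $gS^{<\lambda^+-min}$, the use of categoricity to propagate the failure of inevitability to all extensions $N\geq_K M$ (condition 3(b) of Fact~\ref{nonstructure2}), and the distinction between inevitable and $S_*$-inevitable.
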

\begin{proof}
Suppose not, i.e. $|S^{\lambda-al}(M)|\geq \lambda^+$.\\
\textbf{Claim: } There is no $\Gamma\subseteq S^{\lambda-al}(M)$, $|\Gamma|\leq \lambda$ that is inevitable. 

Otherwise, suppose there exists such $\Gamma$. By Fact \ref{nonstructure3}, taking $S_*$ to be $gS$, and $\Gamma_M$ to be $\Gamma$, we have $|gS(M)|\leq \lambda$, so in particular $|S^{\lambda-al}(M)|\leq \lambda$, contradiction.

Now by the claim and Fact \ref{nonstructure2}, taking $S_*$ there to be $S^{\lambda-al}$ and $\mu$ there to be $\lambda^+$, we have $I(\lambda^+,K)=2^{\lambda^+}$, contradiction. 
\end{proof}

Thus from now on in this section we also assume $|S^{\lambda-al}(M)|\leq \lambda$.

\begin{lem}
$\mathfrak s_{\lambda-unq}$ satisfies the following properties in Definitions \ref{preframe}, \ref{w-good} and \ref{w*-good}:
\begin{enumerate}
    \item (Invariance).
    \item (Monotonicity).
    \item (Non-Forking Types are Basic).
   
    \item AP, JEP and NMM.
     \item (Basic NIP).
    \item (Uniqueness).
    \item (Transitivity).
\end{enumerate}
\end{lem}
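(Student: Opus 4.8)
The plan is to dispatch the seven items by unwinding the definitions of the extension property and of $\lambda$-uniqueness, reserving the only genuine work for (Monotonicity). I would first observe that several items are immediate. (Non-Forking Types are Basic) holds by fiat: the relation $\indep{M_0}{a}{M}{N}$ is only declared when $\type(a/M,N)\in S^{bs}(M)$, so the conclusion is part of the definition of forking. AP, JEP and NMM are exactly the facts recorded just after the Hypothesis (categoricity in $\lambda$ gives $\lambda$-AP via \cite[3.5]{sh88}, JEP from categoricity, and NMM from categoricity together with $K_{\lambda^+}\neq\emptyset$). (Basic NIP) follows because $S^{bs}(M)\subseteq gS(M)$ and the Hypothesis assumes $K_\lambda$ has NIP, whence $|S^{bs}(M)|\leq|gS(M)|\leq\ded\lambda=\ded\|M\|$. (Invariance) holds because an isomorphism $f:N\cong N'$ carries realizations to realizations, hence preserves both the extension property (by Fact \ref{extension} this is just the property of having $\geq\lambda^+$ realizations) and $\lambda$-uniqueness; applying this to the witnesses of forking gives the statement.

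Next I would handle (Transitivity) and (Uniqueness), which are short consequences of the definition of $\lambda$-uniqueness. For (Transitivity): $p$ not forking over $M_1$ means $p\restriction_{M_1}$ is $\lambda$-unique, and $p\restriction_{M_1}$ not forking over $M_0$ means $(p\restriction_{M_1})\restriction_{M_0}=p\restriction_{M_0}$ is $\lambda$-unique; since $p\in S^{bs}(M_2)$ is given, this is exactly the assertion that $p$ does not fork over $M_0$. For (Uniqueness): writing $r:=p\restriction_M=q\restriction_M$, both $p$ and $q$ lie in $S^{bs}(N)$ (so both have the extension property) and both extend $r$; since $p,q$ do not fork over $M$, the type $r$ is $\lambda$-unique, and clause (2) of $\lambda$-uniqueness (applied with the extension being $N\in K_\lambda$) says $r$ has at most one extension to $N$ with the extension property, forcing $p=q$.

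The main obstacle, and the only step needing real verification, is (Monotonicity). Along the chain $M_0\leq_K M_0'\leq_K M_1'\leq_K M_1\leq_K N'\leq_K N\leq_K N''$ I must show that $\indep{M_0}{a}{M_1}{N}$ yields $\indep{M_0'}{a}{M_1'}{N'}$ (and the same with $N''$), which reduces to two claims. First, the extension property is downward monotone in the base and independent of the ambient model: since $\type(a/M_1,N)\in S^{bs}(M_1)$, Fact \ref{extension} gives it $\geq\lambda^+$ realizations, and every such realization also realizes the restriction $\type(a/M_1',N')=\type(a/M_1,N)\restriction_{M_1'}$, so the latter again has $\geq\lambda^+$ realizations and hence the extension property; the passage from $N$ to $N'$ or $N''$ is harmless because Fact \ref{extension} characterizes the extension property intrinsically by the number of realizations. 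Second, $\lambda$-uniqueness is preserved under enlarging the base: if $r=\type(a/M_0,N)$ is $\lambda$-unique and $M_0\leq_K M_0'$, then $r':=\type(a/M_0',N')$ has the extension property (being a restriction of $\type(a/M_1',N')$, just treated), and if some $M_0'\leq_K M''$ in $K_\lambda$ carries two extensions $q_1,q_2$ of $r'$ each with the extension property, then $q_1,q_2$ also extend $r$, so $\lambda$-uniqueness of $r$ forces $q_1=q_2$. Combining the two claims gives $\type(a/M_1',N')\in S^{bs}(M_1')$ together with $\type(a/M_0',N')$ being $\lambda$-unique, i.e.\ $\indep{M_0'}{a}{M_1'}{N'}$, and rerunning the second half with $N''$ in place of $N'$ completes the argument.
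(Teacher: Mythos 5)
Your proposal is correct and follows essentially the same route as the paper, which simply declares these items easy and writes out only (Transitivity) — and your (Transitivity) argument is identical to the paper's (restriction composes, so $(p\restriction_{M_1})\restriction_{M_0}=p\restriction_{M_0}$ is $\lambda$-unique). The remaining items, including your careful verification of (Monotonicity) via Fact \ref{extension} and the downward monotonicity of the realization count, are exactly the details the paper leaves to the reader.
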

\begin{proof}
Easy. We prove (Transitivity) as an example. Suppose $p\in S^{bs}(N)$ does not fork over $M_1\leq_K N$, and $p\restriction_{M_1}$ does not fork over $M_0\leq_K M_1$. Then $(p\restriction_{M_1})\restriction_{M_0}$ is $\lambda$-unique, i.e. $p\restriction_{M_0}$ is. Thus $p$ does not fork over $M_0$.
\end{proof}

The following property is essential for the next lemma.

\begin{mydef}
    A type family $S_*$ is $\lambda$-compact if for every limit ordinal $\delta<\lambda^+$, for every $\langle M_i\in K_\lambda : i<\delta\rangle$ an increasing continuous chain and for every  coherent sequence of types $\langle p_i\in S_*(M_i): i<\delta\rangle$, there is an upper bound $p_\delta\in S_*(\bigcup_{i < \delta} M_i)$ to the sequence such that $\langle p_i\in S_*(M_i): i<\delta +1 \rangle$ is a coherent sequence.
\end{mydef}

For certain results in this paper we need to assume that the basic types (i.e. those with the extension property) is $\lambda$-compact, which, for example, holds for AECs with the disjoint amalgamation property\footnote{For any non-algebraic type $\type(a/M,N)$, if we would like to extend it to $M\leq_K N'$, just disjointly amalgamate $N'$ and $N$ over $M$. The type of the image of $a$ is a non-algebraic extension over $N'$. Thus every type has the $\lambda$-extension property}, where every type has the extension property. Many classes of modules have the disjoint amalgamation property. See \cite[2.10]{dap} and \cite[2.2]{dapbaldwin}. Also, this assumption also holds in quasiminimal abstract elementary classes \cite[4.1]{vaseyquasiminimal}, where there is at most one non-algebraic type\footnote{Since the class is unbounded, there is a non-algebraic type over any countable model (see \cite[4.1]{vaseyquasiminimal}), so any type must have a non-algebraic to any extension of its domain (the unique non-algebraic type there).}.

\begin{lem}[$\ded \lambda=\lambda^+<2^\lambda$]\label{malem}
 Suppose that $S^{bs}$ is $\lambda$-compact. If $p\in S^{bs}(M)$, then there is $N\geq_K M$ and $q\in S^{bs}(N)$ extending $p$ that does not fork over $N$. In particular, for any $N'\geq_K N$ there is unique $q'\in gS(N')$ extending $q$ that does not fork over $N$.
\end{lem}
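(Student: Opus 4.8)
The plan is to prove this by a splitting/non-structure argument: assuming that $p$ has no $\lambda$-unique extension, I would manufacture $2^{\lambda^+}$ pairwise non-isomorphic models in $K_{\lambda^+}$ via Fact \ref{nonstructure2}, contradicting the Hypothesis $I(\lambda^+,K)<2^{\lambda^+}$. First I would unwind the definitions: a type $q\in S^{bs}(N)$ fails to be $\lambda$-unique exactly when it has two distinct extensions with the extension property over some $N\leq_K N'\in K_\lambda$, i.e. the tree of extension-property extensions of $q$ splits above $N$. I would also record, using Lemma \ref{allemma} (so that $|S^{\lambda-al}(N')|\leq\lambda$) together with Fact \ref{extension}, that every $\lambda$-unique type is $<\lambda^+$-minimal: over any $N'$ its extensions are either the unique extension-property one or lie in $S^{\lambda-al}(N')$, hence number at most $1+\lambda<\lambda^+$. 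Thus finding a $\lambda$-unique extension amounts to finding an extension-property extension of $p$ above which the extension-property tree stops splitting.

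Next, assuming for contradiction that no extension-property extension of $p$ is $\lambda$-unique, I would build, by recursion along an increasing continuous chain, a tree of extension-property extensions of (isomorphic copies of) $p$ in which every node splits, and feed it into Fact \ref{nonstructure2}. Concretely I would choose a hereditary $\leq_{K_\lambda}$-type-kind $S_*\subseteq gS^{<\lambda^+-min}$ capturing these extension-property, $<\lambda^+$-minimal extensions (made isomorphism-invariant using categoricity in $\lambda$), verify $|S_*(M)|\geq\lambda^+$ from the counting $|S^{bs}(M)|\leq \ded\lambda=\lambda^+$, and verify the crucial clause that for every $N\geq_K M$ no subset of $S_*(N)$ of size $\leq\lambda$ is $S_*$-inevitable. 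This last point is exactly where the everywhere-splitting hypothesis is used, since a splitting provides, in any extension, a realization of some type in $S_*$ avoiding any prescribed $\leq\lambda$ types (otherwise Fact \ref{nonstructure3} would cap $|S_*(N)|$ at $\lambda$). The role of $\lambda$-compactness of $S^{bs}$ is to guarantee that at the limit stages of the recursion the coherent unions of extension-property types remain extension-property (basic), so that the tree stays inside $S_*$; without it the limit types could drop out of $S^{bs}$ and the construction would collapse. The applicability of Fact \ref{nonstructure2} rests on the standing assumption $2^\lambda<2^{\lambda^+}$.

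The main obstacle I anticipate is precisely the bookkeeping in the middle paragraph: pinning down a hereditary $S_*$ (since $S^{bs}$ itself is not hereditary, because a restriction can have the extension property while the extension does not) and turning ``no $\lambda$-unique extension'' into the quantitative ``no $\leq\lambda$-size $S_*$-inevitable set,'' all while keeping every model in $K_\lambda$ and invoking $\lambda$-compactness only at limits of length $<\lambda^+$. Once the $\lambda$-unique $q\in S^{bs}(N)$ is produced, the ``In particular'' clause is routine: given $N'\geq_K N$, the extension property of $q$ together with its $<\lambda^+$-minimality forces its $\geq\lambda^+$ realizations (Fact \ref{extension}) to concentrate, by regularity of $\lambda^+$, on a single extension $q'\in gS(N')$ with the extension property; then $q'\restriction_N=q$ is $\lambda$-unique, so $q'$ does not fork over $N$, and uniqueness of $q'$ is immediate from clause (2) of $\lambda$-uniqueness, since any non-forking extension of $q$ to $N'$ is basic and hence an extension-property extension of $q$.
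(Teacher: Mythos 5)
There is a genuine gap at the core of your plan: the reduction to Fact \ref{nonstructure2} cannot be carried out. First, hypothesis 3(a) of that fact demands $|S_*(M)|\geq\lambda^+$, and the bound you cite, $|S^{bs}(M)|\leq \ded\lambda=\lambda^+$, is an upper bound and gives no lower bound whatsoever on the number of extension-property, $<\lambda^+$-minimal extensions of $p$. Second, and worse, under your reductio hypothesis (no extension-property extension of $p$ is $\lambda$-unique) the types you want to place in $S_*$ are provably \emph{not} $<\lambda^+$-minimal: every such type splits into two distinct extension-property extensions, and iterating this along an increasing continuous chain of length $\lambda$ (with $\lambda$-compactness keeping the limit nodes basic, exactly as you describe) produces $2^\lambda>\lambda$ pairwise distinct extensions of it over a single model of $K_\lambda$. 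So the requirement $S_*\subseteq gS^{<\lambda^+-min}$ is incompatible with $S_*$ containing the types your argument needs, and the hereditariness problem you flag (a restriction can have the extension property while the extension does not) is a further unresolved obstruction. Finally, your plan never uses $\ded\lambda<2^\lambda$, which is the engine of the actual argument.

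The paper's proof takes the tree you correctly envisage and finishes it by counting rather than by a non-structure theorem: it builds $(a_\eta,M_\eta,N_\eta)$ for $\eta\in{}^{<\lambda}2$ with $M_{\eta\concat 0}=M_{\eta\concat 1}$ and with the two successor types distinct and each having $\lambda^+$-many realizations (this is where non-$\lambda$-uniqueness and, at limits, $\lambda$-compactness enter), then maps all $2^\lambda$ branches into a single model $\mathfrak C\in K_{\lambda^+}$ saturated above $\lambda$, which exists by Fact \ref{jardenshelah} and has cardinality $\ded\lambda=\lambda^+<2^\lambda$. The pigeonhole principle then forces two branches to send their distinguished elements to the same point of $\mathfrak C$, and tracing back to the first level where the branches differ contradicts the distinctness of the two successor types there. (Equivalently, one contradicts NIP directly, since the $2^\lambda$ branches give $2^\lambda>\ded\lambda$ types over the colimit model in $K_\lambda$.) Your reading of the role of $\lambda$-compactness and your argument for the ``in particular'' clause are both sound; it is the middle step, replacing this counting argument by Fact \ref{nonstructure2}, that does not go through.
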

\begin{proof}
It suffices to show that there is a $\lambda$-unique type above any basic type. By Fact \ref{jardenshelah} let $\mathfrak C\in K_{\lambda^+}$ be saturated in $\lambda^+$ over $\lambda$. It is also homogeneous in $\lambda^+$ over $\lambda$ by Fact \ref{satmh}. Let $(a,M,N)\in K^3_\lambda$ such that $\type(a/M,N)$ has the extension property and there is no $\lambda$-unique type above $\type(a/M,N)$. Build $(a_\eta,M_\eta,N_\eta)\in K^3_\lambda$ for $\eta\in {}^{<\lambda}2$ and $h_{\eta,\nu}$ for $\eta<\nu\in {}^{<\lambda}2$ such that:
\begin{enumerate}
    \item $(a_{\langle\rangle},M_{\langle\rangle},N_{\langle\rangle})=(a,M,N)$.
    \item $(a_\eta,M_\eta,N_\eta)\leq_{h_{\eta,\nu}} (a_\nu,M_\nu,N_\nu)$ for $\eta < \nu$.
    \item $h_{\eta,\rho}=h_{\nu,\rho}\circ h_{\eta,\nu}$ for $\eta<\nu<\rho$.
    \item $M_{\eta\concat 0}=M_{\eta\concat 1}$, $N_{\eta\concat 0}=N_{\eta\concat 1}$, and $h_{\eta,\eta\concat 0}\restriction M_\eta=h_{\eta,\eta\concat 1}\restriction M_\eta$.
    \item $\type(a_{\eta\concat 0},M_{\eta\concat 0},N_{\eta\concat 0})\neq \type(a_{\eta\concat 1},M_{\eta\concat 1},N_{\eta\concat 1})$, both having $\lambda^+$-many realizations.
    \item If $\eta\in {}^\delta 2$ for $\delta$ a limit ordinal, take $M_\eta$ and $N_\eta$ to be directed colimits.
\end{enumerate}
\textbf{Construction:} Base case and limit case are clear. At successor stage use non-$\lambda$-uniqueness to get two distinct extensions, each having $\lambda^+$-many realizations. \\
\textbf{Enough:} Let $M\leq_K \mathfrak C\in K_{\lambda^+}$ be saturated over $\lambda$. Build $g_\eta:M_\eta\to \mathfrak C$ for $\eta\in {}^{\leq \lambda} 2$ such that:
\begin{enumerate}
    \item $g_\eta\circ h_{\eta,\nu}=g_\nu$ for $\nu < \eta$.
    \item $g_{\eta\concat 0}=g_{\eta\concat 1}$
\end{enumerate}
\textbf{Sub-Construction:} We carry out the construction by induction on the $\ell(\eta)$, the length of $\eta$. For the base case take $g_{\langle\rangle}$ to be inclusion $M\leq_K \mathfrak C$. At limit use the universal property of $M_\eta$ as a directed colimit. For the successor case, for $\eta$ of length $\alpha=\beta+1$, suppose we have $g_\eta$. 
\begin{equation}
\begin{tikzcd}
\mathfrak C& M''_{\eta\concat 0} \arrow[l,"j"] &M'_{\eta\concat 0} \arrow[r, "\cong_g"] \arrow[l,"\cong_h"] & M_{\eta\concat 0} \\
& g_\eta[M_\eta] \arrow[ul,"id"] \arrow[u,"id"] & M_\eta \arrow[u,"id"] \arrow[l, "\cong_{g_\eta}"]  \arrow[r,"\cong{h_{\eta, \eta\concat 0}}"]  & h_{\eta, \eta\concat 0}[M_\eta] \arrow[u,"id"]
\end{tikzcd}
\end{equation}
Use basic extension to obtain the right square and $g$, and then obtain the middle square and $h$. Finally the left triangle is by saturation of $\mathfrak C$. Now define $g_{\eta\concat 0}=g_{\eta\concat 1}$ to be the composition of the top row from right to left.

\textbf{Sub-Construction is enough: }For each branch $\eta\in {}^{\lambda}2$, take directed colimit to obtain $(a_\eta,M_\eta,N_\eta)$. Obtain $f_\eta:M_\eta\to \mathfrak C$ by the universal property of colimits such that $f_\eta\circ h_{\nu,\eta}=g_\nu$ for all $\nu<\eta$, and obtain $f'_\eta:N_\eta\to\mathfrak C$ extending $f_\eta$ by saturation over $\lambda$. Since each $f'_\eta(a_\eta)\in |\mathfrak C|$, but $\|\mathfrak C\|=\ded \lambda <2^\lambda$, there must be $\eta\neq\nu\in {}^\lambda 2$ such that $f'_\eta(a_\eta)=f'_\nu(a_\nu)$. Let $\alpha<\lambda$ be the least such that $\eta(\alpha)\neq \nu(\alpha)$. Without loss of generality say $\eta(\alpha)=0$ and $\nu(\alpha)=1$. Then the following diagram commutes:
\begin{equation}
\begin{tikzcd}
 N_{\eta\restriction_\alpha \concat 0} \ar[r, "f'_\eta\circ h_{\eta\restriction_\alpha\concat 0,\eta}"]& \mathfrak C\\
 M_{\eta\restriction_\alpha \concat 0}\ar[u,"id"] \ar[r,"id"] &  N_{\eta\restriction_\alpha \concat 1} \ar[u,"f'_\nu\circ h_{\eta\restriction_\alpha\concat 1,\nu}"]
\end{tikzcd}
\end{equation}
with $f'_\eta\circ h_{\eta\restriction_\alpha\concat 0,\eta}(a_{\eta\restriction_\alpha\concat 0})=f'_\nu\circ h_{\eta\restriction_\alpha\concat 1,\nu}(a_{\eta\restriction_\alpha\concat 1})$ since $f'_\eta(a_\eta)=f'_\nu(a_\nu)$, contradicting requirement (5) of the construction.
\end{proof}

\begin{remark}
  The proof of Lemma \ref{malem} is along the argument of Mazari-Armida in \cite[4.13]{mazariwgood} and \cite[VI.2.25]{shelahaecbook2}, and the difference is that there the saturated model over $\lambda$ lies in $K_{\lambda^{++}}$. For completeness we included all the details. 
\end{remark}
\begin{question}
    Lemma \ref{malem} is a weaker form of (Existence of Non-Forking Extension). Is it possible to obtain (Existence of Non-Forking Extension) in its full strength, by perhaps considering another family of basic types and non-forking relation? One could imitate the w-good $\lambda$-frame in \cite{mazariwgood} and use $\lambda$-unique types as basic ones, and then Lemma \ref{malem} gives a proof of (Weak Density). However, then it is hard to show that having such a frame implies NIP.
\end{question}

The following definition is \cite[1.8]{sh394}, which is defined for types of any finite length. Here we only need it for length $1$. Thus we use the version from \cite[11.4(1)]{baldwinbook}.
\begin{mydef}
\begin{enumerate}
    \item $K$ is \emph{$(\kappa,\lambda)$-local} if for every increasing continuous chain $M=\bigcup_{i<\kappa}M_i$ with $\|M\|=\lambda$ and for any $p,q\in gS(M)$: if $p\restriction_{M_i}=q\restriction_{M_i}$ for all $i$ then $p=q$.
    \item $K$ is \emph{$(< \kappa, \lambda)$-local} if $K$ is $(\mu,\lambda)$-local for all $\mu<\kappa$.
\end{enumerate}

\end{mydef}

\begin{lem} \label{locallem}If $K$ is $(<\lambda^+,\lambda)$-local, then $\mathfrak s_{\lambda-unq}$ has (Continuity).
\end{lem}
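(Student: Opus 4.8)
The conclusion of (Continuity) has two parts, and I would dispatch them separately. The ``no forking'' part is immediate from the shape of the frame $\mathfrak s_{\lambda-unq}$: since $p_\delta$ is an upper bound of the coherent sequence we have $p_\delta\restriction_{M_0}=p_0$, and by hypothesis each $p_i$ does not fork over $M_0$, which by definition means $p_i\restriction_{M_0}=p_0$ is $\lambda$-unique. Hence, once we know $p_\delta\in S^{bs}(M_\delta)$, the fact that $p_\delta\restriction_{M_0}=p_0$ is $\lambda$-unique gives at once that $p_\delta$ does not fork over $M_0$. So all the content is in showing $p_\delta\in S^{bs}(M_\delta)$, i.e. that $p_\delta$ has the extension property.

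By Fact \ref{extension} it suffices to produce a single $N\in K_{\lambda^+}$ with $M_\delta\leq_K N$ in which $p_\delta$ has $\geq\lambda^+$ realizations. First I would build such an $N$ carrying $\lambda^+$ realizations of the \emph{smaller} type $p_0$: since $p_0$ is $\lambda$-unique it has the extension property, so I can construct an increasing continuous chain $\langle N_\xi\mid \xi<\lambda^+\rangle$ in $K_\lambda$ with $N_0=M_\delta$, where at each successor step the extension property of $p_0$ provides a new realization $b_\xi$ of $p_0$ over $N_\xi$ (lying outside $N_\xi$). Setting $N=\bigcup_\xi N_\xi$ gives $N\in K_{\lambda^+}$ with $\lambda^+$ distinct realizations $\{b_\xi\}$ of $p_0$ over $M_0$, with no appeal to a saturated model.

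The heart of the argument is to show that almost all of these $b_\xi$ in fact realize $p_\delta$. Fix $i<\delta$. A realization $b_\xi$ for which $\type(b_\xi/M_i,N)\in S^{\lambda-al}(M_i)$ lies among the $\leq\lambda$ many $\lambda$-algebraic types over $M_i$ (Lemma \ref{allemma}), each with $\leq\lambda$ realizations in $N$, so only $\leq\lambda$ indices $\xi$ are ``bad'' at stage $i$; taking the union over the $\leq\lambda$ many $i<\delta$ still leaves $\lambda^+$ indices $\xi$ that are good at every $i$. For such a good $b_\xi$, the type $\type(b_\xi/M_i,N)$ is non-$\lambda$-algebraic, hence has the extension property by Fact \ref{extension}, and it restricts to $p_0$ over $M_0$; since $p_0$ is $\lambda$-unique it has at most one extension to $M_i$ with the extension property, forcing $\type(b_\xi/M_i,N)=p_i=p_\delta\restriction_{M_i}$ for all $i<\delta$. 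Now $(<\lambda^+,\lambda)$-locality, applied to a cofinal subsequence of $\langle M_i\rangle$ of length $\mathrm{cf}(\delta)<\lambda^+$, forces $\type(b_\xi/M_\delta,N)=p_\delta$. Thus $p_\delta$ has $\geq\lambda^+$ realizations in $N$, and Fact \ref{extension} yields the extension property, so $p_\delta\in S^{bs}(M_\delta)$.

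The step I expect to be the main obstacle is the counting/uniqueness interplay in the third paragraph: one must know that ``lacking the extension property'' coincides with being $\lambda$-algebraic (so that Lemma \ref{allemma} controls the bad set), that restrictions of extension-property types retain the extension property, and that locality is available for the (possibly singular) length $\delta$ via passage to a cofinal sequence. Each of these is routine given the facts already established, but this is exactly where the hypotheses $K_\lambda$-NIP (through Lemma \ref{allemma}) and $\lambda$-uniqueness genuinely combine to pin down $p_\delta$.
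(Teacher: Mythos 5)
Your proof is correct and is essentially the paper's argument run in the contrapositive direction: the paper assumes $p_\delta$ has $\leq\lambda$ realizations, takes $\lambda^+$ realizations of $p_0$, uses locality to make each fail at some successor stage, and pigeonholes to contradict $\lambda$-uniqueness via Lemma \ref{allemma}, while you directly discard the $\leq\lambda$ realizations with $\lambda$-algebraic types and show the remaining $\lambda^+$ realize $p_\delta$. The ingredients --- the extension property of $p_0$, Lemma \ref{allemma}, $\lambda$-uniqueness of $p_0$, locality, and Fact \ref{extension} --- are used identically, so this is the same proof with slightly more explicit bookkeeping.
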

\begin{proof}
Let $M_i$, $i<\delta$ be increasing continuous. $p_i\in S^{bs}(M_i)$ increasing and for $i<j<\delta$ we have $p_j\restriction_{M_i}=p_i$, all non-forking over $M_0$ and $p_\delta$ upper bound. Suppose $p_\delta$ has $\leq \lambda$-many realizations. Then there is a set $S$ of cardinality $\lambda^+$ of realizations of $p_0$, such that for each $a\in S$, by locality there is $i< \delta$ such that $a$ realizes $p_i$ but not $p_{i+1}$. By pigeonhole principle for some $i<\delta$ there are $\lambda^+$-many realizations of $p_i$ that are not realizations of $p_{i+1}$. Since there are $\leq \lambda$-many types in $S(M_{i+1})$ that have $\leq \lambda$-many realizations, there must be another type in $S(M_{i+1})$ with $\lambda^+$ realizations distinct from $p_{i+1}$, which contradicts $\lambda$-uniqueness of $p_{i+1}$. 

For the moreover part, if $p_0$ does not fork over $M_0$, so $p_0=p_\delta\restriction_{M_0}$ is $\lambda$-unique, i.e. $p_\delta$ does not fork over $M_0$.
\end{proof}

\begin{thm} [$2^{\lambda^+}>2^\lambda$] Let $K$ be an AEC categorical in $\lambda\geq LS(K)$ , and $1\leq I(\lambda^+,K)<2^{\lambda^+}$. $K_\lambda$ has NIP if and only if there is a w*-good $\lambda$-frame on $K$ except possibly without (Continuity). Moreover, 
\begin{enumerate}
    \item ($\ded \lambda=\lambda^+<2^\lambda$) If $\mathfrak s_{\lambda-unq}$ is $\lambda$-compact, then the w*-good frame satisfies in addition that if $p\in S^{bs}(M)$, then there is $N\geq_K M$ and $q\in S^{bs}(N)$ extending $p$ that does not fork over $N$. In particular, for any $N'\geq_K N$ there is $q'\in gS(N')$ extending $q$ that does not fork over $N$.
    \item if $K$ is $(<\lambda^+,\lambda)$-local, then $\mathfrak s_{\lambda-unq}$ has (Continuity).
\end{enumerate}

\end{thm}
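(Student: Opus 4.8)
The plan is to read the biconditional as two implications anchored on the frame $\mathfrak{s}_{\lambda-unq}$ already defined, and then to obtain the two moreover clauses directly from Lemmas \ref{malem} and \ref{locallem}.

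For the forward direction, assume $K_\lambda$ has NIP and take the candidate to be $\mathfrak{s}_{\lambda-unq}$, whose basic types are those with the extension property and whose non-forking relation is $\lambda$-uniqueness of the restriction. I would verify every axiom of a w*-good $\lambda$-frame except (Continuity). The axioms (Invariance), (Monotonicity), (Non-Forking Types are Basic), AP/JEP/NMM, (Basic NIP), (Uniqueness) and (Transitivity) are exactly the items already checked in the lemma listing the basic properties of $\mathfrak{s}_{\lambda-unq}$; in particular (Basic NIP) holds because $S^{bs}(M)\subseteq gS(M)$ and the NIP hypothesis bounds $|gS(M)|$ by $\ded\lambda$. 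The only remaining axiom is (Few Non-Basic Types), which is precisely Lemma \ref{allemma}: by Fact \ref{extension} the non-basic types are exactly the types with at most $\lambda$ realizations, i.e.\ the members of $S^{\lambda-al}(M)$, and that lemma bounds their number by $\lambda$. Assembling these yields a w*-good frame possibly lacking only (Continuity), as claimed.

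For the reverse direction, suppose some w*-good $\lambda$-frame $(K,\nf,S^{bs})$ exists (possibly without (Continuity)), and fix $M\in K_\lambda$. Then (Basic NIP) gives $|S^{bs}(M)|\le \ded\|M\|=\ded\lambda$, while (Few Non-Basic Types) gives $|gS(M)-S^{bs}(M)|\le \lambda$. Since $\ded\lambda$ is an infinite cardinal with $\ded\lambda\ge \lambda$, it follows that $|gS(M)|\le \ded\lambda+\lambda=\ded\lambda$, which is exactly the NIP condition. This direction is therefore pure cardinal arithmetic once any such frame is handed to us, and importantly it does not require the frame to be $\mathfrak{s}_{\lambda-unq}$.

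Finally, the two moreover clauses are immediate restatements: clause (1), under $\ded\lambda=\lambda^+<2^\lambda$ together with $\lambda$-compactness of $S^{bs}$, is the content of Lemma \ref{malem}, and clause (2), under $(<\lambda^+,\lambda)$-locality, is the content of Lemma \ref{locallem}. The genuine difficulty lies not in this assembly but upstream in Lemma \ref{allemma}: that is the single place where the set-theoretic hypothesis $2^{\lambda^+}>2^\lambda$, together with the categoricity-driven non-structure theorems (Facts \ref{nonstructure2} and \ref{nonstructure3}), is indispensable, since it must exclude the existence of $\lambda^+$ many types with few realizations admitting no small inevitable subfamily. Everything else in the theorem is bookkeeping or elementary cardinal arithmetic.
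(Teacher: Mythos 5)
Your proposal is correct and follows essentially the same route as the paper: the forward direction is exactly the assembly of the preceding lemmas (the basic-properties lemma for Invariance through Transitivity, Lemma \ref{allemma} together with Fact \ref{extension} for (Few Non-Basic Types)), the reverse direction is the same cardinal arithmetic $|gS(M)|\le |S^{bs}(M)|+|gS(M)-S^{bs}(M)|\le \ded\lambda+\lambda=\ded\lambda$, and the moreover clauses are Lemmas \ref{malem} and \ref{locallem}. The paper's stated proof is only the one-line reference to Lemma \ref{malem}, so your write-up simply makes explicit the assembly the paper leaves implicit.
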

\begin{proof}
The moreover part follows from Lemma \ref{malem}.
\end{proof}
\begin{question}\label{q2}
Are there other positive consequences of NIP that rely on weaker assumptions?
        
\end{question}

\section{Syntactic independence property}
In this section we assume tameness, and use Galois Morleyization to show that the negation of NIP leads to being able to encode subsets, as a parallel of first order independence property.
\begin{hypothesis}
Let $\kappa$ be an infinite cardinal and $K$ an AEC. Let $\tau=L(K)$ be its underlying language.
\end{hypothesis}
We first extend the definition of Galois types to longer lengths and set-valued domains.
\begin{mydef}
\begin{enumerate}
    \item $K^3:=\{(\bar a,A,N)\mid  N\in K, A\subseteq  |N|, \bar a\mbox{ is a sequence from }|N|\}$.

    \item For $(\bar a_0,A,N_0), (\bar a_1,A,N_1)\in K^3$, $(\bar a_0,A,N_0) E_{at}(\bar a_1,A,N_1)$ if there are $N\in K$, $f_0:N_0\to_A N$, and $f_1:N_1\to_A N$ $K$-embeddings such that $f_0(\bar a_0)=f_1(\bar a_1)$, $f_0\restriction_A=f_1\restriction_A$.
    \item $E$ is the transitive closure of $E_{at}$.\
    \item For $(\bar a,A,N)\in K^3$, \emph{the Galois type of $\bar a$ over $A$ in $N$} is $\type(\bar a/A,N):=[(\bar a,A,N)]_{E}$.
    \item For $N\in K$ and $A\subseteq |N|$, $\alpha$ an ordinal or $\infty$, $gS^{<\alpha}(A;N):=\{\type(\bar a/A,N)\mid (\bar a,A,N)\in K^3 \mbox{ and }\bar a\in {}^{<\alpha} |N|\}$. $gS^{\alpha}(A;N)$ is defined similarly.
    \end{enumerate}
\end{mydef}

\begin{remark}
  In the case where $A=|M|$ for $M\in K$, $\bigcup_{N\geq_K M} gS^1(|M|, N)$ is what we defined as $gS(M)$ in Definition \ref{galoistypes}.
\end{remark}
 The following technique first appeared in \cite{vmorleyization}, which allows one to work with Galois types in a syntactic way.

\begin{mydef}
Let $\kappa$ be an infinite cardinal and $K$ an AEC. The \emph{$(<\kappa)$-Galois Morleyization} of $K$ is $\hat K$, an AEC (except that the language might not be finitary) in a  ($<\kappa)$-ary language $\hat \tau$ extending $\tau$ such that:
\begin{enumerate}
    \item The structures and the substructure relation $\leq_{\hat K}$ in $\hat K$ are the same as $K$.
    \item For each $p\in gS^{<\kappa}(\emptyset)$, there is a predicate of the same length $R_p\in \hat \tau$. For each $M\in K$ and $\bar a\in |M|$, define $M\models R_p[\bar a]$ if and only if $\type(\bar a/\emptyset, M)=p.$ By extension, one can interpret quantifier-free $L_{\kappa,\kappa}(\hat \tau)$ formulas.
    \item The $(<\kappa)$-syntactic type of $\bar a\in^{<\kappa} |M|$ over $A\subseteq |M|$ is $\textbf{tp}_{\mbox{qf-} L_{\kappa,\kappa}(\hat \tau)}(\bar a/A,M)$, the set of all quantifier-free $L_{\kappa,\kappa}(\hat \tau)$ formulas with parameters from $A$ that $\bar a$ satisfies. For a particular quantifier-free $L_{\kappa,\kappa}(\hat \tau)$-formula $\phi(\bar x,\bar y)$,\\ $\textbf{tp}_\phi(\bar b/A,M):=\{\phi(\bar x,\bar a)\mid \bar a\in A, M\models \phi(\bar b,\bar a)\}$.
    \item For $M\in K$ and $A\subseteq |M|$,  $S^{<\alpha}_{\mbox{qf}-L_{\kappa,\kappa}(\hat \tau)}(A;M):=\{\textbf{tp}_{\mbox{qf-} L_{\kappa,\kappa}(\hat \tau)}(\bar b/A, M)\mid \bar b\in {}^{<\alpha}|M|\}$
\end{enumerate}
\end{mydef}

\begin{remark}
  There are $\leq 2^{<(LS(K)^++\kappa)}$ formulas in $\hat \tau$. 
\end{remark}

\begin{mydef}
    For a theory $T$ in first order logic, and $\Gamma$ a set of $T$-types, $\tau$ a language contained in the language of $T$, let $EC(T,\Gamma)$ denote the class of models of $T$ omitting all types in $\Gamma$. Let $PC(T,\Gamma,\tau)$ denote the class of models of $T$ omitting all types in $\Gamma$ as $\tau$-structures.
\end{mydef}

\begin{fact}
\cite[3.18(2)]{vmorleyization}
Under the notation of the previous definition, $K$ is $(<\kappa)$-tame if and only if for each ordinal $\alpha$, $M\in K$, $A\subseteq M$, $\type(\bar b/A,M)\mapsto \textbf{tp}_{\mbox{qf-} L_{\kappa,\kappa}(\hat \tau)}(\bar b/A,M)$ from $gS^\alpha(A;M)$ to $S^\alpha_{\mbox{qf}-L_{\kappa,\kappa}(\hat \tau)}(A;M)$ is bijective.
\end{fact}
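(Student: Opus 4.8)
The plan is to verify that the reduction map $\Phi:\type(\bar b/A,M)\mapsto \textbf{tp}_{\mbox{qf-}L_{\kappa,\kappa}(\hat\tau)}(\bar b/A,M)$ is always well-defined and surjective, and that tameness is exactly what controls injectivity. For well-definedness, suppose $\type(\bar b/A,M)=\type(\bar b'/A,M)$. Unwinding $E_{at}$ and its transitive closure $E$, there are $K$-embeddings into a common model fixing $A$ pointwise and sending $\bar b$ to $\bar b'$. Since each predicate $R_p$ of $\hat\tau$ decodes only Galois types over $\emptyset$, and such an amalgam preserves $\type((\bar b\restriction u)\concat \bar a/\emptyset,\cdot)$ for every subtuple $u$ and every $\bar a$ from $A$, the two quantifier-free $L_{\kappa,\kappa}(\hat\tau)$-types agree; concretely this is just invariance of Galois types over $\emptyset$ under the $K$-embeddings of the amalgam. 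Surjectivity is immediate, since both $gS^\alpha(A;M)$ and $S^\alpha_{\mbox{qf-}L_{\kappa,\kappa}(\hat\tau)}(A;M)$ are indexed by the tuples $\bar b\in{}^\alpha|M|$ realized in $M$, and $\Phi$ is induced by the identity on these tuples.

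For the forward direction, assume $K$ is $(<\kappa)$-tame; I would prove $\Phi$ injective. The device is to absorb the domain into the tuple: fixing an enumeration $\bar A$ of $A$, one has $\type(\bar b/A,M)=\type(\bar b'/A,M)$ if and only if $\type(\bar b\concat\bar A/\emptyset,M)=\type(\bar b'\concat\bar A/\emptyset,M)$, because an amalgam witnessing one witnesses the other (fixing $A$ pointwise amounts to matching the $\bar A$-coordinates). Now if the syntactic types agree, then for each subtuple $u$ of length $<\kappa$ and each $\bar a$ from $A$ with $|u|+|\bar a|<\kappa$, the predicate $R_p$ with $p=\type((\bar b\restriction u)\concat\bar a/\emptyset,M)$ forces $\type((\bar b\restriction u)\concat\bar a/\emptyset,M)=\type((\bar b'\restriction u)\concat \bar a/\emptyset,M)$. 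Hence $\type(\bar b\concat\bar A/\emptyset,M)$ and $\type(\bar b'\concat\bar A/\emptyset,M)$ have identical restrictions to every $<\kappa$-subtuple, and $(<\kappa)$-tameness promotes this to equality of the full Galois types; the reduction above then gives $\type(\bar b/A,M)=\type(\bar b'/A,M)$.

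For the converse, assume $\Phi$ is bijective for all $\alpha,M,A$; I would deduce tameness by contraposition. Given distinct Galois types $p\neq q$ over $A$, injectivity of $\Phi$ over $A$ forces $\textbf{tp}_{\mbox{qf-}L_{\kappa,\kappa}(\hat\tau)}(\cdot/A,M)$ to differ, so some quantifier-free $L_{\kappa,\kappa}(\hat\tau)$-formula $\phi(\bar x,\bar a)$ separates them. As $\phi$ has fewer than $\kappa$ free variables, its parameters lie in some $A_0\subseteq A$ with $|A_0|<\kappa$, so the syntactic types already differ over $A_0$; since equal Galois types over $A_0$ would yield equal syntactic types there (well-definedness), we conclude $p\restriction_{A_0}\neq q\restriction_{A_0}$. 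This is exactly $(<\kappa)$-tameness.

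The main obstacle I anticipate is the bookkeeping of lengths: a quantifier-free $L_{\kappa,\kappa}(\hat\tau)$-formula refers to fewer than $\kappa$ coordinates at once, so when $\alpha\geq\kappa$ no single formula sees all of $\bar b$, and the syntactic type records only the Galois types of the $<\kappa$-subtuples of $\bar b\concat\bar A$. Turning this local data into equality of the full-length Galois type is precisely what $(<\kappa)$-tameness must deliver (together with the reduction to types over $\emptyset$ that simultaneously shortens the tuple and the domain); making that single step airtight, rather than the two essentially routine directions of well-definedness and surjectivity, is where the real work lies.
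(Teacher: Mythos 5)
The paper gives no proof of this statement: it is quoted as a Fact from Vasey's Galois Morleyization paper, so there is nothing internal to compare against and your argument has to stand on its own. Three of its four parts do. Well-definedness is right (each atomic $R_p$ reads off a Galois type over $\emptyset$, which $K$-embeddings preserve, so a chain of amalgams witnessing $E$ preserves the quantifier-free type); surjectivity is right (both sides are indexed by the realized tuples $\bar b\in{}^\alpha|M|$); and the converse is right (a separating quantifier-free formula carries fewer than $\kappa$ parameters, so distinct Galois types already differ over some $A_0\subseteq A$ with $|A_0|<\kappa$).

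The gap sits exactly at the step you yourself flag as the crux. From equality of the syntactic types you correctly extract that corresponding subtuples of $\bar b\concat\bar A$ and $\bar b'\concat\bar A$ of length $<\kappa$ have equal Galois types over $\emptyset$, and you then assert that ``$(<\kappa)$-tameness promotes this to equality of the full Galois types.'' It does not: tameness lets you shrink the \emph{domain} of a type to a piece of size $<\kappa$, whereas what this step requires is recovering the Galois type of a tuple of length $\geq\kappa$ from the Galois types of its $<\kappa$-length \emph{subtuples}; that is $(<\kappa)$-type-shortness, a genuinely different property. In Vasey's own formulation, bijectivity of this map for \emph{every} ordinal $\alpha$ is equivalent to $K$ being \emph{fully $(<\kappa)$-tame and short}, and the statement as transcribed in this paper elides the shortness half (consistently, your converse direction actually delivers shortness as well as tameness, since the separating formula also mentions fewer than $\kappa$ of the variables). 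Your forward direction is complete and correct whenever $|\alpha|<\kappa$ --- in particular for $\alpha=1$, the only case this paper uses --- because there tameness reduces to a domain $A_0$ with $\bar b\concat\bar A_0$ of length $<\kappa$, and a single predicate $R_p$ sees the whole tuple. For $\alpha\geq\kappa$ you must either add shortness to the hypothesis or restrict the claim to tuples of length $<\kappa$.
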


\begin{notation}
    For any formula $\varphi$ and a condition $i$, $\varphi^i$ means $\varphi$ itself when $i$ holds, and $\neg \varphi$ otherwise. For example, at the end of the proof of the next theorem, the formula is $\phi(c_i,x)$ and the condition is $i\in w$. When $i\in w$ holds, $\phi(c_i,x)^{i\in w}$ is $\phi(c_i,x)$. When $i\notin w$, $\phi(c_i,x)^{i\in w}$ is $\neg \phi(c_i,x)$.
    
\end{notation}

\begin{thm}
Suppose $K$ is $(<\aleph_0)$-tame, $M\in K$, $C\subseteq |M|$, $\lambda:=|C|\geq \beth_3(LS(K))$ and $(\ded \lambda)^{2^{LS(K)}}= \ded \lambda$. Suppose $|gS^1(C;M)|>\ded \lambda$. Then there is $N\in K$, $\langle \bar a_n\in^m |N|\mid n<\omega \rangle$ and $\phi$ in the language of Galois Morleyization
such that for every $w\subseteq \omega$ there is $b_w\in |N|$ such that for all $i<\omega$, 
$$
N\models \phi(\bar a_i,b_w) \iff i\in w
$$
\end{thm}

\begin{proof}
Let $\hat K$ be the $(<\aleph_0)$ Galois Morleyization of $K$. Note that both classes have the same Galois types. By Shelah's Presentation Theorem $\hat K=PC(T,\Gamma, \hat \tau)$ with $|T|\leq 2^{LS(K)}$, with the language of $T$ containing $\hat \tau$. Then by tameness and the previous fact $|S^{1}_{\mbox{qf}-L_{\omega,\omega}(\hat \tau)}(C;M)|>\ded \lambda$, so for some quantifier-free formula $\phi(\bar y,x)$ in $ L_{\omega,\omega}(\hat \tau)$ with $|S_\phi(C;M)|> \ded \lambda$, since there are $\leq 2^{LS(K)}$-many quantifier-free $L_{\omega,\omega}(\hat \tau)$-formulas.

Without loss of generality $C=\lambda=|C|$. Let $\mu:=(\ded \lambda)^+$. For notational simplicity we view $S_{\phi}(C;
M)$ as $S$, a family of subsets of $ ^{\ell(\bar y)}C$, where 
$$
A\in S\iff \{\phi(\bar a, x)\mid \bar a\in A\}\in S_\phi(C;M).
$$
We also assume $\bar y$ has length $1$. The proof for other cases is similar.

\textbf{Claim: }For all $\alpha <\lambda$, if $|\{A\cap \alpha\mid A\in S\}|\geq\mu$, then $\alpha\geq (\beth_2(LS(K)))^+$.\\
\emph{Proof of Claim: }Suppose there is $\alpha< \lambda$, $|\{A\cap \alpha\mid A\in S\}|\geq \mu$. Since $\{A\cap \alpha\mid A\in S\}$ is the set of branches of the a subtree of $^{<\alpha}2$, $\ded \lambda< \mu\leq \ded |^{<\alpha} 2|\leq \ded 2^{|\alpha|}$, so $2^{|\alpha|}>\lambda\geq \beth_3(LS(K))$, so $|\alpha|> \beth_2(LS(K))$. Thus the claim holds.

We may assume $\lambda> \beth_2(LS(K))$ and for all $\alpha<\lambda$, $|\{A\cap \alpha\mid A\in S\}|<\mu$. If this holds, then we are done since $\lambda\geq \beth_3(LS(K))>\beth_2(LS(K))$. If not, replace $\lambda$ with smallest $\alpha<\lambda$ such that $|\{A\cap \alpha \mid A\in S\}|\geq \mu.$ By minimality for all $\beta<\alpha$, $|\{A\cap \beta \mid A\in S\}|<\mu$. Such $\alpha$ might be small, but by the claim $\alpha\geq (\beth_2(LS(K)))^+,$ and this is enough for the rest of the argument.

  For each $\alpha\leq \lambda$ let $S^0_\alpha:=\{\langle A\cap \alpha,\alpha\rangle \mid A\in S\}$. $\bigcup_{\alpha<\lambda}  S^0_{\alpha}$ is a tree when equipped with
$$
(A_1,\alpha_1)\leq (A_2,\alpha_2) \iff \alpha_1\leq \alpha_2 \land A_1=A_2\cap \alpha_1.
$$
Let
$$
S^1_\alpha:=\{s\in S^0_\alpha\mid |\{t\in S^0_\alpha\mid s\leq t\}|\geq \mu\},
$$
and 
$$
S^1_\lambda:=\{s\in S^0_\lambda\mid \forall \alpha<\lambda( s\restriction_\alpha\in S^1_\alpha)\}.
$$
We build \begin{enumerate}
    \item for $n<\omega$, $S_n\subseteq S^1_\lambda$, and
    \item for each $(B,i)$ such that $B\subseteq A$ for some $(A,\lambda)\in S_n$ and $i<\lambda$,
    \begin{enumerate}
        \item $\lambda>\alpha^{B}_i(n,0)>\ldots >\alpha^{B}_i(n,n-1)>i$, a sequence of ordinals,
        \item $(D^{(B,i)}_{u,n},\lambda)\in S^1_\lambda$ for each $u\subseteq n$, and
        
    \end{enumerate}
    \item $p_n\in S^{n+2^n}_T(\emptyset)$ for $n<\omega$
\end{enumerate} such that:

\begin{enumerate}
    \item $S_0=S^1_\lambda$;
    \item $|S_n|\geq \mu\geq (\beth_2(LS(K)))^+$ for all $n$;
    \item $S_{n+1}\subseteq S_n$ for all $n$;
    \item The variables of $p_n$ are $x_i$ for $i<n$ ordered naturally and $y_S$ for $S\subseteq n$;
    \item $p_n\subseteq p_{n+1}$ for all $n$. This means the $p_{n+1}$ restricted to $x_i$ for $i<n$ and $y_S$ for $S\subseteq n$ is equal to $p_n$;
    \item For all $n<m$, $(A,\lambda)\in S_n$ and $(B,\lambda)\in S_m$, $i,j\in \lambda$
    \begin{align*}
        p_n &=\textbf{tp}_{T} (\langle \alpha^{A\cap i}_i(n,0),\ldots \alpha^{A\cap i}_i(n,n-1)\rangle\concat \langle D^{(A\cap i,i)}_{w,n}\mid w\subseteq n\rangle/\emptyset, M)\\
        &=\textbf{tp}_{T} (\langle \alpha^{B\cap j}_j(m,0),\ldots \alpha^{B \cap j}_j(m,n-1)\rangle\concat \langle D^{(B\cap j,j)}_{w,m}\mid w\subseteq m\rangle/\emptyset, M);
    \end{align*}
    \item For all $(A,i)\in S_n$ and $w\subseteq n$,  $(A,i)\leq (D^{(A,i)}_{w,n},\lambda)$ and $\alpha^A_i(n,i)\in D^{(A,i)}_{w,n}\iff i\in w$. 
\end{enumerate}
\textbf{Construction: }We build these objects by induction on $n$. When $n=0$ let $D^{(\emptyset,0)}_{\emptyset,0}$ be any element in $S^1_\lambda$. Assume we have built $S_n$, $\alpha^{A\cap i}_i(n,j)$ for $(A,\lambda)\in S_n$ and $p_n$. 

Fix $s=(A,i)$ for $i<\lambda$ such that for some $B$, $A\subseteq B$ and $(B,\lambda)\in S_n$. Clearly $T_s:=\{t\in \bigcup_{\beta<\lambda} S^1_\beta\mid s\leq t$ and $t$ extends to an element in $S_n\}$ is a tree. For every $s\leq t\in S_n$, $B_t:=\{t^*\mid s\leq t^*\leq t\}$ is a branch of $T_s$, and $t_1\neq t_2\implies B_{t_1}\neq B_{t_2}$. Since $$|S^0_\lambda-S^1_\lambda|=|\bigcup_{\alpha<\lambda, s\in S^0_\alpha-S^1_\alpha}\{t\in S^0_\lambda\mid s\leq t\}|<\mu,$$ $T_s$ has $\geq \mu$-many branches, and hence $|T_s|>\lambda$. 
Then for some $i'$, $|T_s\cap S^1_{i'}|>\lambda$. 
Let $s_{ j}=(A_{j}, i')\in T_s\cap S^1_{i'}$ for $j<\lambda^+$. Since there are $\leq \lambda$ finite tuples of ordinals $<\lambda$, we may assume $\alpha_{i'}^{A_j}$ are the same for all $j$. Now let $\alpha^A_i(n+1,k):=\alpha^{A_j}_{i'}(n, k)$ for all $k<n$. Let $\alpha^A_i(n+1,n)$ be the least $\alpha$ such that $s_{0}(\alpha)\neq s_{1}(\alpha)$, i.e. $\alpha \in A_0-A_1$ or $\alpha\in A_1-A_0$. Without loss of generality assume the latter case. For $w\subseteq (n+1)$, let $D^{(A,i)}_{w,n+1}:=D^{(A_0,i')}_{w,n}$ if $n\notin w$ and $D^{(A,i)}_{w,n+1}:=D^{(A_1,i')}_{w,n}$ if $n\in w$.

Note that $i<\alpha^A_i(n+1,n)<i'<\alpha^A_i(n+1,n-1)<\ldots <\alpha^A_i(n+1,0)$. Since $|S_n|\geq (\beth_2(LS(K)))^+$, and there are $\leq \beth_2(LS(K))$ $T$-types, by the pigeonhole principle there is $S_{n+1}\subseteq S_n$, $|S_{n+1}|\geq (\beth_2(LS(K)))^+$ such that for all $(A,i)$, $ (B,j)\in S_{n+1}$,
$$\textbf{tp}_{T} (\langle \alpha^A_i(n,0),\ldots \alpha^A_i(n+1,n)\rangle\concat \langle D^{(A,i)}_{w,n+1}\mid w\subseteq n+1\rangle/\emptyset, M)$$ is the same,
and define this type to be $p_{n+1}$. This finishes the construction. Note that here since $D^{(A,i)}_{w,n+1}$ is an element of $S^1_\lambda\subseteq S^0_\lambda=S$, i.e. a $\phi$-type, the ``$T$-type'' of $D^{(A,i)}_{w,n+1}$ is just the $T$-type of a realization of it, which can be fixed at the beginning of the proof.
\begin{align*}
    T^*:=T\cup \{\phi(c_i,d_w)^{i\in w})\mid w\subseteq \omega\}\cup \{p_n(\langle c_i\mid i<n\rangle \concat \langle d_w\mid w\subseteq \omega\rangle )\mid n<\omega\}
\end{align*}
is consistent, and by Morley's method we are done.
\end{proof}

Similar to the order property, this analogue of the independence property for AECs also has a Hanf number $\beth_{(2^{LS(K)})^+}$.

\begin{thm}
    If $K$ can encode subsets of $\mu:=\beth_{(2^{LS(K)})^+}$, then it can encode subsets of any cardinal. That is, if there are $M\in K$, $\{a_i\mid i<\mu\}\subseteq |M|$, $\{b_w\mid w\subseteq \mu\}\subseteq |M|$ such that for all $w\subseteq \mu$, $$i\in w\iff \phi(a_i,b_w),$$ then we can replace $\mu$ above by any cardinal.
\end{thm}

\begin{proof}
    We fix $\hat K$ and $\phi$ as in the proof of the previous theorem. Let $\lambda=(2^{LS(K)})^+$. Suppose $K$ can encode subsets of $\mu:=\beth_{(2^{LS(K)})^+}$. That is, there are $M\in K$, $\{a_i\mid i<\mu\}\subseteq |M|$, $\{b_w\mid w\subseteq \mu\}\subseteq |M|$ such that for all $w\subseteq \mu$, $$i\in w\iff \phi(a_i,b_w).$$ For each $i_0<\ldots <i_{n-1}<\mu$ and $u\subseteq n$, choose some subset $w\subseteq \mu$ such that $i_j\in w \iff \phi(a_{i_j},b_w)\iff j\in u$, and let $b_{u,n}^{i_0,\ldots,i_{n-1}}:=b_w$.
    We build $\langle F_n\subseteq \mu\mid n<\omega\rangle$, $\langle X_{\xi,n}\subseteq\mu\mid \xi\in F_n, n<\omega\rangle$ and $p_n\in S^{n+2^n}_T(\emptyset)$ such that:
    \begin{enumerate}
        \item For all $n<\omega$, $|F_n|=\lambda$;
        \item $|X_{\xi,n}|>\beth_\beta (2^{LS(K)})$ when $\xi$ is the $\beta$-th element of $F_n$;
        \item $p_n(\langle a_{i_j}\mid j<n\rangle \concat \langle b^{i_0,\ldots, i_{n-1}}_{u,n}\mid u\subseteq n\rangle)$.
    \end{enumerate}
Let $F_0=\lambda$ and $X_{\xi,0}:=\mu$ for all $\xi$. Suppose we have constructed everything for stage $n$. Fix $g:\lambda\to F_n$ an increasing enumeration. Let $G_n:=\{g(\beta+n+1)\mid \beta<\lambda\}$. For each $\xi=g(\beta+n+1)\in G_n$, consider the map $\langle i_j\mid j<n\rangle \mapsto \textbf{tp}_T(\langle a_{i_j}\mid j<n+1\rangle \concat \langle b^{i_0,\ldots, i_{n}}_{u,n+1}\mid u\subseteq n+1\rangle/\emptyset, M)$ from $[X_{\xi,n}]^{n+1}$ (increasing $(n+1)$-tuples from $X_{\xi,n}$) to $S_T^{n+2^n}(\emptyset)$. Since $|X_{\xi,n}|>\beth_{\beta+n+1}((2^{LS(K)})^+)$, by the Erd\H os-Rado theorem, there is a monochromatic subset $X_{\xi, n+1}\subseteq X_{\xi,n}$ such that $|X_{\xi,n+1}|>\beth_\beta((2^{LS(K)})^+)$. I.e. there is a type $p_{\xi,n+1}$ such that for all $i_0<\ldots <i_n$, $\textbf{tp}_T(\langle a_{i_j}\mid j<n\rangle \concat \langle b^{i_0,\ldots, i_{n}}_{u,n+1}\mid u\subseteq n+1\rangle/\emptyset, M)=p_{\xi,n+1}$. By the pigeonhole principle there is $F_{n+1}\subseteq G_n$ of cardinality $\lambda$ and $p_{n+1}$ such that for all $\xi\in F_{n+1}$, $p_{\xi,n+1}=p_{n+1}$.

Then $$T^*:=T\cup \{\phi(c_i,d_w)^{i\in w})\mid w\subseteq \kappa\}\cup \{p_n(\langle c_{i_j}\mid j<n\rangle \concat \langle d_w\mid w\subseteq w\rangle )\mid n<\omega, i_0<\ldots<i_{n-1}<\kappa\}$$
is consistent for any cardinal $\kappa$. By Morley's method we are done.
\end{proof}

\begin{lem}[Morley's method]
 Let $T$ be a first order theory with built-in Skolem functions and $\Gamma$ a set of $T$-types. Let $\langle c_i\mid i<\alpha\rangle$ be new constants. Let $p_{S}$ be a $T$-type in $|S|$ variables for every finite subset $S$ of $\alpha$, and $T^*$ a theory not containing any of the new constants such that:
\begin{enumerate}
    \item $T^*\supseteq T\cup \{p_S(\langle c_{\gamma}\mid \gamma\in S \rangle ) \mid S\subseteq \alpha \text{ finite}\}$ is consistent;
    \item Each $p_S$ is realized in some $M\in EC(T,\Gamma)$.

\end{enumerate} 
    Then there is $N\in EC(T^*, \Gamma)$.
\end{lem}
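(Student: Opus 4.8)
The plan is to exploit the built-in Skolem functions of $T$: I would realize the desired model as the Skolem hull generated by the new constants inside a model of the consistent diagram, and then read off omission of $\Gamma$ directly from condition (2).

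First I would use consistency (1) to fix a model $M^*$ of $T^*\cup\{p_S(\langle c_\gamma\mid \gamma\in S\rangle)\mid S\subseteq\alpha\text{ finite}\}$ in the language $\tau\cup\{c_i\mid i<\alpha\}$, where $\tau$ is the language of $T$. Let $N$ be the Skolem hull of $\{c_i^{M^*}\mid i<\alpha\}$ computed inside $M^*$ using the Skolem functions of $T$. Because $T$ has built-in Skolem functions, $N\preceq M^*\upharpoonright\tau$; and since $T^*$ is a set of $\tau$-sentences (it contains none of the new constants) with $M^*\models T^*$, elementarity gives $N\models T^*$. This settles that $N$ is a model of $T^*$, so it remains only to check omission.

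The crux is showing $N$ omits every $q\in\Gamma$. Since $N$ is a Skolem hull, every element of $N$ has the form $\sigma(c_{i_0},\dots,c_{i_{n-1}})$ for a Skolem term $\sigma$ and indices $i_0<\dots<i_{n-1}$; write $S=\{i_0,\dots,i_{n-1}\}$ and $\bar c_S$ for the corresponding tuple of constants. Suppose toward a contradiction that some tuple $\bar\sigma(\bar c_S)$ realizes $q\in\Gamma$ in $N$. Then for every $\psi\in q$ we have $\psi(\bar\sigma(\bar y))\in\operatorname{tp}^N(\bar c_S)$, where $\bar y$ is the variable tuple for $\bar c_S$. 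As $\bar c_S$ realizes the complete type $p_S$ over $\emptyset$ (exactly as the lemma is applied, where $p_n\in S^{n+2^n}_T(\emptyset)$), this forces $\psi(\bar\sigma(\bar y))\in p_S$ for every $\psi\in q$. Now invoke condition (2): fix $M\in EC(T,\Gamma)$ and $\bar e\in M$ realizing $p_S$. Then $M\models\psi(\bar\sigma(\bar e))$ for all $\psi\in q$, so $\bar\sigma(\bar e)$ realizes $q$ in $M$, contradicting $M\in EC(T,\Gamma)$. Hence no tuple from $N$ realizes any type in $\Gamma$, i.e. $N\in EC(T^*,\Gamma)$.

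The main obstacle is precisely this transfer step: knowing only that $\bar c_S$ realizes $p_S$ in $N$ suffices to push a putative realization of $q$ into the omitting model $M$ exactly when $p_S$ determines $\operatorname{tp}^N(\bar c_S)$, that is, when $p_S$ is a complete type over $\emptyset$. This is how the lemma is used above, so I would either phrase the hypothesis for complete $p_S$ or, equivalently, replace each $p_S$ by the complete type over $\emptyset$ of a realization in the relevant $M\in EC(T,\Gamma)$, verifying that the enlarged diagram $T^*\cup\{p_S(\bar c_S)\}$ stays consistent. A secondary bookkeeping point is coherence of the $p_S$ across $S\subseteq S'$, but this is already subsumed by the consistency hypothesis (1). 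Everything else is the routine observation that Skolem hulls are elementary and that elements of the hull are terms in the generators, which is what makes omission a finitary, per-tuple check.
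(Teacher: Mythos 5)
Your proposal is correct and follows essentially the same route as the paper: pass to the Skolem hull (the paper's $EM(\{c_i\mid i<\alpha\})$) of the new constants inside a model of the consistent diagram, write each element as a term in the constants, and use completeness of $p_S$ to transfer a putative realization of some $q\in\Gamma$ to the model from condition (2) that omits $\Gamma$. Your explicit flagging that $p_S$ must be a \emph{complete} type is exactly the hypothesis the paper's proof uses (``As $p_S$ is complete\ldots''), and it is satisfied in the applications since $p_n\in S^{n+2^n}_T(\emptyset)$.
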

\begin{proof}
Let $M$ be a model of $T^*$ and without loss of generality $M=EM(\{c_i\mid i<\alpha\})$. We show that $M$ omits all types from $\Gamma$. Suppose not, i.e. $a\in |M|$ realizes some $q\in \Gamma$. Write $a$ as $\tau^M(c_{i_0},\ldots, c_{i_k})$ for some term $\tau$ in the language of $T$. Let $S:=\{c^M_{i_0},\ldots , c^M_{i_k}\}$ and $\langle b_0,\ldots, b_k\rangle \subseteq N^*\in EC(T,\Gamma)$ realizing $p_S$. Then for some $\varphi(y)\in q$, $N^*\models \neg \varphi(\tau(b_0,\ldots,b_k))$. As $p_S$ is complete, $\neg \varphi(\tau(x_0,\ldots,x_k))\in p_S$. Thus $M\not\models  \varphi(\tau(c_{i_0},\ldots,c_{i_k}))$, i.e. $M\models \neg\varphi(a)$, so $a$ does not realize $q$.
\end{proof}

\bibliographystyle{amsalpha}
\bibliography{nip.bib}

\end{document}